\newtheorem{theorem}{Theorem}[section]
\newtheorem{lemma}[theorem]{Lemma}
\theoremstyle{definition}
\newtheorem{example}[theorem]{Example}
\theoremstyle{remark}
\newtheorem{corollary}{Corollary}
\numberwithin{equation}{section}
\begin{document}

\title[System of two singularly perturbed convection-diffusion equations]{A parameter uniform fitted mesh method for a weakly coupled system of two singularly perturbed convection-diffusion equations}

\author{Saravana Sankar Kalaiselvan}
\address{Department of Mathematics, Bishop Heber College, Tiruchirappalli, Tamil Nadu, India}
\email{saravanasankar55@gmail.com}
\thanks{The first author has been supported with Junior Research Fellowship by UGC, India.}

\author{John J.H. Miller}
\address{Trinity College, Dublin, Ireland}
\email{jmiller@tcd.ie}

\author{Valarmathi Sigamani}
\address{Department of Mathematics, Bishop Heber College, Tiruchirappalli, Tamil Nadu, India}
\email{valarmathi07@gmail.com}

\subjclass[2010]{Primary 65L11; Secondary 65L12, 65L20, 65L70}



\keywords{Singular perturbation problems; System of convection-diffusion equations; Boundary layers; Shishkin mesh; Parameter uniform convergence}

\begin{abstract}
In this paper, a boundary value problem for a singularly perturbed linear system of two second order ordinary differential equations of convection-diffusion type is considered on the interval $[0,1]$. The components of the solution of this system exhibit boundary layers at $0$. A numerical method composed of an upwind finite difference scheme applied on a piecewise uniform Shishkin mesh is suggested to solve the problem. The method is proved to be first order convergent in the maximum norm uniformly in the perturbation parameters. Numerical examples are provided in support of the theory.
\end{abstract}

\maketitle




\section{Introduction}
\label{sec:1}
Singular perturbation problems of convection-diffusion type arise in many areas of applied mathematics such as fluid dynamics, chemical reactor theory, etc. Also, linearising Navier-Stokes equations, which plays vital role in the field of science, leads to a system of convection-diffusion equations.\\
\indent	For a broad introduction to singularly perturbed boundary value problems of convection-diffusion type one can refer to  \cite{MOS96}, \cite{DMS80} and  \cite{FHM00}. There, the authors suggest robust computational techniques to solve them. A class of systems of singularly perturbed reaction-diffusion equations has been examined by several authors in \cite{NM03}, \cite{LM09}, \cite{PVM10} and \cite{CGL10}.\\
\indent Here, in this paper, a weakly coupled system of two singularly perturbed convection - diffusion equations with distinct perturbation parameters is studied both analytically and numerically. If the perturbation parameters are equal, then the arguments in \cite{FHM00} are sufficient to show that the suggested method is parameter uniform. But in general boundary layers of unequal width are expected for the components of the solution because of the coupling of the components.\\
\indent In the papers \cite{BO04} and \cite{OM09}, a class of strongly coupled systems of singularly perturbed convection-diffusion problems is examined. A coupled system of two singularly perturbed convection-diffusion equations is considered in \cite{ZC04}. In \cite{L07}, the author analysed  a coupled system of singularly perturbed convection-diffusion equations.
\\ \indent In this paper, the major assumptions $\varepsilon_1 \le CN^{-1},\; \varepsilon_2 \le CN^{-1}$ in \cite{ZC04}, are removed. Moreover the analytical and numerical arguments are completely different from \cite{ZC04} and \cite{L07} in the following sense. The decomposition of the solution is based on the effect of each perturbation parameter on the components of the solution. Thus, we get more information about the components of the solution and its layer pattern.  Also, it is to be noted that the decomposition of the smooth component in \cite{ZC04} is given a correct definition, here in this paper.\\
\\{\bf Notations.} For any  real valued function $y$ on $D$, the norm of $y$ is defined as $\|y\|_D= \displaystyle{\sup_{x \in D}}|y(x)| $. For any vector valued function $\vec{z}(x)=(z_1(x),z_2(x))^T$, $|\vec{z}(x)|=\big(|z_1(x)|,|z_2(x)|\big)^T$, $(\vec{z}(x))_i=z_i(x)$ and $\|\vec{z}\|_D=max\big\{\|z_1\|_D, \|z_2\|_D\big\}$. Also $\vec{z}(x) \ge \vec{0}$, if $z_1(x)\ge 0$ and $z_2(x)\ge 0$. 
 
 For any mesh function $Y$ on $D^N=\big\{x_j\big\}^N_{j=0}$, $\|Y\|_{D^N}= \displaystyle{\max_{0\le j\le N}}|Y(x_j)| $ and for any vector valued mesh function $\vec{Z}=(Z_1,Z_2)^T$, $|\vec{Z}(x_j)|=(|Z_1(x_j)|,|Z_2(x_j)|)^T$, $\|\vec{Z}\|_{D^N}=max\big\{\|Z_1\|_{D^N}, \|Z_2\|_{D^N}\big\}$.\\
\indent Throughout this paper, C denotes a generic positive constant which is independent of the singular perturbation and discretization parameters.

\section{Formulation of the problem}
\label{sec:2}
\noindent Consider the following system of equations
\begin{eqnarray}\label{e1}
L \vec{u}(x)  \equiv E\vec{u}^{\prime \prime}(x)+A(x)\vec{u}^{\prime}(x)-B(x)\vec{u}(x)=\vec{f}(x), x \in \Omega
\\\label{e2}
\vec{u}(0)=\vec{l},\;\; \vec{u}(1)=\vec{r},\hspace{5.6cm}
\end{eqnarray}
where, $\Omega= (0,1),\;\vec{u}(x)=(u_1(x),u_2(x))^T,\;\vec{f}(x)=(f_1(x),f_2(x))^T$,\;\;
$$ E =
\begin{bmatrix}
\varepsilon_1 & 0 \\ 0 & \varepsilon_2
\end{bmatrix},\;\;
A(x)=
\begin{bmatrix}
a_1(x) & 0 \\ 0 & a_2(x)
\end{bmatrix},\;\;
B(x)=
\begin{bmatrix}
b_{11}(x) & -b_{12}(x) \\ -b_{21}(x) & b_{22}(x)
\end{bmatrix}.
$$

Here, $ \varepsilon_1$ and $\varepsilon_2$ are two distinct small positive parameters and, without loss of generality, we assume that $\varepsilon_1 < \varepsilon_2.$ The coefficient functions are taken to be sufficiently smooth on $\overline{\Omega}$ and $a_i(x)\ge \alpha > 0,\; b_{ii}(x)-b_{ij}(x)\ge\beta>0,\; b_{ij}>0,\;$ for $i,j=1,2$ and $i\ne j.$\\
\indent The case $a_i(x)\le \alpha < 0$, for $i=1,2$, is put into the form (\ref{e1}) by the change of independent variable from $x$ to $1-x$.\\
\indent
Since, the matrix B(x) is not diagonal and the matrix A(x) is diagonal, the sytem is weakly coupled. If the matrix A(x) is not diagonal, then the system becomes strongly coupled. If $a_1(x)$ and $a_2(x)$ are zero functions, then the above problem comes under the class considered in \cite{NM03}.\\
\\
The reduced problem corresponding to (\ref{e1})-(\ref{e2})  is
\begin{eqnarray}
L_0 \vec{u_0}(x)  \equiv A(x)\vec{u_0}^{\prime}(x)-B(x)\vec{u_0}(x)=\vec{f}(x),\; x \in \Omega\\
 \vec{u_0}(1)=\vec{r},\hspace{6.1cm}
\end{eqnarray}
where, $\vec{u_0}(x)=(u_{01}(x),u_{02}(x))^T.$\\

A boundary layer of width $O(\varepsilon_2)$ is expected near $x=0$ in the solution components $u_1$ and $u_2$, if $u_2(0)\ne u_{02}(0)$ and
a boundary layer of width $O(\varepsilon_1)$ is expected near $x=0$ in the solution component $u_1$, if $u_1(0)\ne u_{01}(0)$. Numerical illustrations provided for each case exhibit such layer patterns.

\section{Analytical Results}
\label{sec:3}
In this section, a maximum principle, a stability result and estimates of the derivatives of the solution of the system of equations (\ref{e1})-(\ref{e2}) are presented.
\begin{lemma}[Maximum Principle]\label{L1}
Let  $\vec{\psi} \in (C^2({\overline\Omega}))^2$ such that $\vec{\psi}(0) \ge \vec{0},\;\vec{\psi}(1) \ge \vec{0}, \;L\vec{\psi} \le \vec{0}\; on\; (0,1)\;,  then \;\vec{\psi} \ge \vec{0} \; on\; [0,1].$
\end{lemma}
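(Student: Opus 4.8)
The plan is to argue by contradiction, in the spirit of the classical scalar maximum principle but adapted to handle the coupling between the two components through the structure of $B(x)$. Suppose the conclusion fails, so that $\vec{\psi}$ takes a strictly negative value somewhere on $[0,1]$. I would introduce the joint minimum
\[
m = \min_{i \in \{1,2\}} \ \min_{x \in [0,1]} \psi_i(x),
\]
which under the contradiction hypothesis satisfies $m < 0$, and fix an index $i$ and a point $x^*$ at which it is attained, so that $\psi_i(x^*) = m$ while $\psi_j(x^*) \ge m$ for the other component $j \ne i$.

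First I would show that $x^*$ must be interior. Because $\vec{\psi}(0) \ge \vec{0}$ and $\vec{\psi}(1) \ge \vec{0}$, both components are nonnegative at the endpoints, so a strictly negative minimum cannot be attained at $x=0$ or $x=1$; hence $x^* \in (0,1)$. At an interior minimum of the $C^2$ function $\psi_i$, the first- and second-derivative tests give $\psi_i'(x^*) = 0$ and $\psi_i''(x^*) \ge 0$.

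The heart of the argument is to evaluate the $i$-th component of $L\vec{\psi}$ at $x^*$ and extract a sign contradiction. Taking $i = 1$ for definiteness,
\[
(L\vec{\psi})_1(x^*) = \varepsilon_1 \psi_1''(x^*) + a_1(x^*)\psi_1'(x^*) - b_{11}(x^*)\psi_1(x^*) + b_{12}(x^*)\psi_2(x^*),
\]
where the diffusion term is nonnegative since $\varepsilon_1 > 0$, and the convection term vanishes by the derivative tests. The main obstacle, and the only place the coupling genuinely enters, is controlling the zeroth-order block $-b_{11}(x^*)\psi_1(x^*) + b_{12}(x^*)\psi_2(x^*)$, since a priori the off-diagonal term carries the \emph{other} component's value. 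Here I would exploit $\psi_2(x^*) \ge \psi_1(x^*) = m$ together with $b_{12} > 0$ to bound this block below by $-\big(b_{11}(x^*) - b_{12}(x^*)\big)\,m$, and then invoke the row-sum hypothesis $b_{11} - b_{12} \ge \beta > 0$ and $m < 0$ to conclude that it is strictly positive.

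Combining the three contributions then yields $(L\vec{\psi})_1(x^*) > 0$, which contradicts the assumption $L\vec{\psi} \le \vec{0}$ on $(0,1)$. The case $i = 2$ is entirely symmetric, using instead the inequality $b_{22} - b_{21} \ge \beta > 0$ and $\psi_1(x^*) \ge \psi_2(x^*) = m$. In either case the contradiction forces $m \ge 0$, that is $\vec{\psi} \ge \vec{0}$ on $[0,1]$, completing the proof.
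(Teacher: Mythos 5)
Your proposal is correct and follows essentially the same argument as the paper: locate the (strictly negative, hence interior) joint minimum over both components, apply the first- and second-derivative tests, and use $b_{12}>0$ together with the row-dominance condition $b_{11}-b_{12}\ge\beta>0$ to force $(L\vec{\psi})_i(x^*)>0$, contradicting $L\vec{\psi}\le\vec{0}$. The only cosmetic difference is that you phrase the minimum as $m=\min_i\min_x\psi_i(x)$ while the paper takes separate minimizers $x^*,y^*$ and assumes without loss of generality $\psi_1(x^*)\le\psi_2(y^*)$; these are the same device.
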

\begin{proof} 
Let $x^*$ and $y^*$ be such that $\psi_1(x^*)=\displaystyle{\min_{x\in\overline{\Omega}}} \psi_1(x)$ and $\psi_2(y^*)=\displaystyle{\min_{x\in\overline{\Omega}}} \psi_2(x)$. Without loss of generality, we assume that $\psi_1(x^*) \le \psi_2(y^*)$ and suppose $\psi_1(x^*)<0;$\;then $x^*\not\in\{0,1\},\; \psi_1^\prime(x^*)=0 \; \text{and}\; \psi_1^{\prime\prime}(x^*)\ge 0.$\\
$(L\vec{\psi})_1(x^*)\ge \varepsilon_1 \psi_1^{\prime \prime}(x^*) + a_1(x^*)\psi_1^\prime (x^*)-(b_{11}(x^*)-b_{12}(x^*))\psi_1(x^*)> 0,$ contradiction to the assumption that $ L\vec{\psi} \le \vec{0} $ on $(0,1)$.
Hence, $\vec{\psi}(x)\ge0$, on $[0,1]$. 
\end{proof}

An immediate consequence of the maximum principle is the following stability result.
\begin{lemma}[Stability Result]\label{L2}
Let $\vec{\psi} \in (C^2(\overline\Omega))^2$, then for $ x \in\overline\Omega $ and i=1,2
$$|\psi_i(x)|\le \text{max}\Big\{\|\vec{\psi}(0)\|,\; \|\vec{\psi}(1)\|,\; \frac{1}{ \beta } \|L\vec{\psi} \| \Big\}.$$
\end{lemma}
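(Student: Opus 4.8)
The plan is to prove this two-sided bound by a standard barrier-function argument resting entirely on the Maximum Principle (Lemma \ref{L1}). First I would set $M=\max\big\{\|\vec\psi(0)\|,\,\|\vec\psi(1)\|,\,\frac{1}{\beta}\|L\vec\psi\|\big\}$ and introduce the two comparison functions $\vec\psi^{\pm}=M\vec e\pm\vec\psi$, where $\vec e=(1,1)^T$. The underlying idea is that the constant vector $M\vec e$ is a barrier large enough to dominate $\vec\psi$ both at the two endpoints and under the action of $L$, so that the Maximum Principle forces $\vec\psi^{\pm}\ge\vec 0$ on $[0,1]$; this single inequality is precisely the claimed bound $|\psi_i(x)|\le M$.

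To invoke Lemma \ref{L1} for $\vec\psi^{\pm}$ I would verify its three hypotheses in turn. The boundary conditions are immediate: at $x=0$ each component satisfies $\psi_i^{\pm}(0)=M\pm\psi_i(0)\ge M-\|\vec\psi(0)\|\ge 0$, and the same estimate at $x=1$ gives $\vec\psi^{\pm}(0)\ge\vec 0$ and $\vec\psi^{\pm}(1)\ge\vec 0$.

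The main step is to show $L\vec\psi^{\pm}\le\vec 0$ on $(0,1)$. Since $\vec e$ is constant, the second- and first-derivative terms of $L(M\vec e)$ vanish, leaving $L(M\vec e)=-M B(x)\vec e$. Computing componentwise and using the structure of $B(x)$, the $i$-th component of this vector equals $-M\big(b_{ii}(x)-b_{ij}(x)\big)$ with $j\ne i$, which by the hypothesis $b_{ii}-b_{ij}\ge\beta>0$ is at most $-M\beta$. Hence $(L\vec\psi^{\pm})_i=-M\big(b_{ii}-b_{ij}\big)\pm(L\vec\psi)_i\le -M\beta+\|L\vec\psi\|$, and since $M\ge\frac{1}{\beta}\|L\vec\psi\|$ gives $M\beta\ge\|L\vec\psi\|$, the right-hand side is $\le 0$.

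With all three hypotheses verified, applying Lemma \ref{L1} to $\vec\psi^{\pm}$ yields $M\vec e\pm\vec\psi\ge\vec 0$, that is, $-M\le\psi_i(x)\le M$ for $i=1,2$ and every $x\in\overline\Omega$, which is the desired estimate. The only place where the coupling enters essentially is the sign of $L(M\vec e)$: it is exactly the assumption $b_{ii}-b_{ij}\ge\beta$ (rather than merely $b_{ii}>0$) that lets the constant barrier absorb the off-diagonal terms of $B(x)$. I do not anticipate any genuine obstacle beyond correctly tracking the signs in this computation, so I expect the verification of $L\vec\psi^{\pm}\le\vec 0$ to be the crux of the argument.
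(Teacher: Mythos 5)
Your proof is correct and is precisely the standard barrier-function argument that the paper has in mind when it calls the lemma ``an immediate consequence of the maximum principle'' (the paper omits the details entirely). The key computation $(L(M\vec e))_i=-M\bigl(b_{ii}-b_{ij}\bigr)\le -M\beta$ and the verification of the boundary and interior hypotheses of Lemma~\ref{L1} are all carried out correctly.
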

\begin{corollary} Let $\vec{u}$ be the solution of $(\ref{e1})-(\ref{e2})$, then 
$$|u_i(x)| \le \text{max}\Big\{\|\vec{l}\|,\; \|\vec{r}\|,\; \frac{1}{ \beta } \|\vec{f} \| \Big\}.$$
\end{corollary}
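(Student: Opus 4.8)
The plan is to obtain the Corollary as an immediate specialization of the Stability Result (Lemma~\ref{L2}), since the estimate there is stated for an arbitrary $\vec{\psi} \in (C^2(\overline\Omega))^2$ and the solution $\vec{u}$ is precisely such a function for the particular choice of boundary data and right-hand side prescribed in \eqref{e1}--\eqref{e2}.

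First I would confirm that $\vec{u}$ satisfies the hypothesis of Lemma~\ref{L2}, namely that $\vec{u} \in (C^2(\overline\Omega))^2$. This is a regularity fact that follows from the standing assumption that the coefficient functions $a_i$ and $b_{ij}$ together with $\vec{f}$ are sufficiently smooth on $\overline\Omega$; under these conditions the two-point boundary value problem \eqref{e1}--\eqref{e2} has a unique classical solution of the required smoothness, so Lemma~\ref{L2} applies with $\vec{\psi} = \vec{u}$.

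Next I would read off the three quantities appearing inside the maximum in Lemma~\ref{L2} for this choice. By the boundary conditions \eqref{e2} we have $\vec{u}(0) = \vec{l}$ and $\vec{u}(1) = \vec{r}$, so $\|\vec{u}(0)\| = \|\vec{l}\|$ and $\|\vec{u}(1)\| = \|\vec{r}\|$. By the differential equation \eqref{e1} we have $L\vec{u} = \vec{f}$, so $\tfrac{1}{\beta}\|L\vec{u}\| = \tfrac{1}{\beta}\|\vec{f}\|$. Substituting these three identities into the inequality of Lemma~\ref{L2}, valid for $i=1,2$ and all $x \in \overline\Omega$, yields exactly the claimed bound on $|u_i(x)|$.

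There is essentially no analytical obstacle here: the content of the Corollary is already contained in Lemma~\ref{L2}, and the only point that deserves a word is the regularity of $\vec{u}$ ensuring membership in $(C^2(\overline\Omega))^2$. Consequently the argument is purely a matter of matching the data of \eqref{e1}--\eqref{e2} against the generic hypotheses of the stability lemma, and I would keep the write-up to a single line invoking Lemma~\ref{L2}.
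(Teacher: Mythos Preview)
Your proposal is correct and matches the paper's approach exactly: the Corollary is stated in the paper without proof, as it is an immediate specialization of Lemma~\ref{L2} with $\vec{\psi}=\vec{u}$, using the boundary data $\vec{u}(0)=\vec{l}$, $\vec{u}(1)=\vec{r}$ and $L\vec{u}=\vec{f}$.
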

\begin{theorem}\label{T1}
Let $ {\vec u}$  be the solution of (\ref{e1})-(\ref{e2}), then for x $\in  \overline\Omega$ and i=1,2
\begin{eqnarray}\label{e5}
 |u_{i}^{(k)} (x)|\le C\varepsilon_{i}^{-k}\Big(\|\vec{u}\|+\varepsilon_{i}\|\vec{f}\|\Big)\;\; for\;\; k=1,2\;\\
 |u_{1}^{(3)} (x)|\le C\varepsilon_{1}^{-3}\Big(\|\vec{u}\|+\varepsilon_{1}\|\vec{f}\|\Big)+\varepsilon_{1}^{-1}\|f_{1}^{\prime}\|\hspace{0.55cm}\\
 |u_{2}^{(3)} (x)|\le C\varepsilon_{2}^{-2}\varepsilon_{1}^{-1}\Big(\|\vec{u}\|+\varepsilon_{2}\|\vec{f}\|\Big)+\varepsilon_{2}^{-1}\|f_{2}^{\prime}\|
\end{eqnarray}
\end{theorem}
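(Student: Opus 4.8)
The plan is to exploit the fact that, although the system is coupled through the off-diagonal entries $b_{12}$ and $b_{21}$ of $B(x)$, the convection matrix $A(x)$ is diagonal. Hence each component may be regarded as the solution of a \emph{scalar} convection-diffusion equation whose forcing term absorbs the coupling. Writing out the two rows of (\ref{e1}),
\begin{equation*}
\varepsilon_1 u_1'' + a_1 u_1' - b_{11}u_1 = f_1 - b_{12}u_2 =: g_1, \qquad \varepsilon_2 u_2'' + a_2 u_2' - b_{22}u_2 = f_2 - b_{21}u_1 =: g_2,
\end{equation*}
and since $\vec u$ is already bounded by the Corollary following Lemma~\ref{L2}, we have $\|g_i\| \le \|\vec f\| + C\|\vec u\|$ for $i=1,2$. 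The whole argument then reduces to the scalar theory applied to $u_1$ (with parameter $\varepsilon_1$, forcing $g_1$) and to $u_2$ (with parameter $\varepsilon_2$, forcing $g_2$).

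First I would establish the basic scalar estimate: if $\varepsilon y'' + a y' - by = g$ on $[0,1]$ with $a \ge \alpha > 0$, $b \ge 0$ and $y,g$ bounded, then $|y'(x)| \le C\varepsilon^{-1}(\|y\| + \varepsilon\|g\|)$. The cleanest route is a mean-value argument: on a subinterval of length $\varepsilon$ adjacent to $x$ the mean value theorem produces a point $\xi$ with $|y'(\xi)| \le 2\varepsilon^{-1}\|y\|$; rewriting the equation as the first-order linear ODE $\varepsilon (y')' + a (y') = by + g$ and integrating from $\xi$ to $x$ against the integrating factor $\exp(\varepsilon^{-1}\int a)$, whose kernel decays in the direction of increasing $x$ because $a>0$, then yields the stated bound. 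The $k=2$ estimate follows immediately by solving the equation for $y'' = \varepsilon^{-1}(g - ay' + by)$ and inserting the $k=1$ bound. Applying this to each component with $y=u_i$, $\varepsilon=\varepsilon_i$, $g=g_i$, and using $\|g_i\| \le \|\vec f\| + C\|\vec u\|$ together with the smallness of $\varepsilon_i$ to absorb the lower-order $\varepsilon_i\|\vec u\|$ contributions, gives (\ref{e5}) for $k=1,2$.

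For the third derivatives I would differentiate each scalar equation once and solve for the leading term, e.g. $\varepsilon_1 u_1''' = g_1' - a_1 u_1'' - a_1' u_1' + b_{11}' u_1 + b_{11} u_1'$, with $g_1' = f_1' - b_{12}' u_2 - b_{12} u_2'$. Inserting the already-proved bounds for $u_1''$, $u_1'$ and for the cross term $u_2'$ produces the claimed estimate for $u_1'''$: the dominant contribution $\varepsilon_1^{-1} a_1 u_1''$ generates the $\varepsilon_1^{-3}$ term, while the genuinely new ingredient is the explicit $\varepsilon_1^{-1}\|f_1'\|$ coming from $g_1'$. The analogous computation for $u_2'''$ uses $g_2' = f_2' - b_{21}' u_1 - b_{21} u_1'$; here the coupling term $u_1'$, bounded by $C\varepsilon_1^{-1}(\|\vec u\|+\varepsilon_1\|\vec f\|)$, is responsible for the factor $\varepsilon_2^{-1}\varepsilon_1^{-1}$, and combining this with the $\varepsilon_2^{-3}$ arising from $\varepsilon_2^{-1}a_2 u_2''$ via the hypothesis $\varepsilon_1 < \varepsilon_2$ (so that $\varepsilon_2^{-3} \le \varepsilon_2^{-2}\varepsilon_1^{-1}$) collapses everything into $C\varepsilon_2^{-2}\varepsilon_1^{-1}(\|\vec u\| + \varepsilon_2\|\vec f\|) + \varepsilon_2^{-1}\|f_2'\|$.

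I expect the base scalar first-derivative estimate to demand the most care, in particular choosing the auxiliary interval so that the integrating-factor kernel decays in the correct direction, toward increasing $x$ where the layer at $x=0$ forces the steepest gradients, and treating $x$ close to the left endpoint separately. After that the coupled and third-derivative bounds are essentially bookkeeping; the one point to watch is the consistent use of $\varepsilon_1 < \varepsilon_2$ and of the smallness of the parameters to absorb all mixed terms into the right-hand sides stated in the theorem.
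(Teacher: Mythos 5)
Your proposal is correct and follows essentially the same route as the paper: a mean value theorem argument on a subinterval of width $\varepsilon_i$ to bound $u_i'$ at one point, integration of the differential equation over that subinterval to bound $u_i'$ everywhere, and then solving the equation (and its derivative) for $u_i''$ and $u_i^{(3)}$, with the coupling term $b_{21}u_1'$ producing the $\varepsilon_2^{-1}\varepsilon_1^{-1}$ factor exactly as you describe. The only cosmetic difference is that you control the convection term via an integrating factor where the paper integrates by parts; both work since the interval has length $\varepsilon_i$.
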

\begin{proof}
\indent For any $x \in [0,1]$, there exists $a\in [0,1- \varepsilon_{i}$] such that $x \in N_{a}=[a,a+\varepsilon_{i}].$
By mean value theorem, there exists $y_{i} \in (a, a+\varepsilon_{i})$ such that
$$u_{i}^{\prime}(y_{i})=\frac{u_{i}(a+\varepsilon_{i})-u_{i}(a)}{\varepsilon_{i}}$$
and hence
$$|u_{i}^{\prime}(y_{i})|\le C\varepsilon_{i}^{-1}\|\vec u\|.$$
Also, $$u_{i}^{\prime}(x)=u_{i}^{\prime}(y_{i})+\int_{y_{i}}^{x}u_{i}^{\prime\prime}(s)ds$$
Substituting for $u_i^{\prime\prime}(s)$ from (\ref{e1}) and integrating by parts, we get 
$$|u_{i}^{\prime}(x)| \le C\varepsilon_{i}^{-1}\Big(\|\vec u\|+ \varepsilon_{i}\|\vec f\|\Big).$$
Again from (\ref{e1}),
$$|u_{i}^{\prime\prime}(x)|\le C\varepsilon_{i}^{-2}\Big(\|\vec u\|+ \varepsilon_{i}\|\vec f\|\Big).$$
Differentiating (\ref{e1}) once and substituting the above bounds lead to
$$|u_1^{(3)}(x)|\le C\varepsilon_{1}^{-3}\Big(\|\vec u\|+ \varepsilon_{1}\|\vec f\|\Big)+\varepsilon_{1}^{-1}\|f_1^\prime\|\;\;\quad$$
$$|u_2^{(3)}(x)|\le C\varepsilon_{2}^{-2}\varepsilon_{1}^{-1}\Big(\|\vec u\|+ \varepsilon_{2}\|\vec f\|\Big)+\varepsilon_{2}^{-1}\|f_2^\prime\|.$$
\end{proof}
\subsection{Shishkin decomposition of the solution}
The solution $\vec{u}$ of the problem (\ref{e1})-(\ref{e2}) can be decomposed into smooth and singular components $\vec{v}$ and $\vec{w}$ given by
$$\vec u=\vec v+\vec w$$
where 
\begin{eqnarray}\label{ev}
L\vec{v}=\vec{f}, \vec{v}(1)=\vec{r},\; \vec{v}(0)\;\text{suitably chosen},\\ \label{ew}
L\vec{w}=\vec{0}, \vec{w}(0)=\vec{l}-\vec{v}(0), \vec{w}(1)=\vec{0} \qquad\;
\end{eqnarray}
with $\vec{v}=(v_1, v_2)^T$ and $\vec{w}=(w_1, w_2)^T.$\\
\\
Now, $\vec{v}$ is decomposed into $\vec v=\vec y_0+\varepsilon_2 \vec y_1+\varepsilon_2^2 \vec y_2$, where\\
$\vec y_0=( y_{01}, y_{02})^T$ is the solution of (\ref{e8})-(\ref{e10}),
\begin{eqnarray}\label{e8}
a_1(x)y_{01}^{\prime}(x)-b_{11}(x)y_{01}(x)+b_{12}(x)y_{02}(x)=f_1(x)\\\label{e9}
a_2(x)y_{02}^{\prime}(x)+b_{21}(x)y_{01}(x)-b_{22}(x)y_{02}(x)=f_2(x)\\\label{e10}
y_{01}(1)=r_1(1),\;y_{02}(1)=r_2(1),\hspace{3.1cm}
\end{eqnarray}
$\vec y_1=(y_{11},y_{12})^T$ is the solution of (\ref{e11})-(\ref{e13}),
\begin{eqnarray}\label{e11}
a_1(x)y_{11}^{\prime}(x)-b_{11}(x)y_{11}(x)+b_{12}(x)y_{12}(x)=-\frac{\varepsilon_1}{\varepsilon_2}y_{01}^{\prime\prime}(x)\\\label{e12}
a_2(x)y_{12}^{\prime}(x)+b_{21}(x)y_{11}(x)-b_{22}(x)y_{12}(x)=-y_{02}^{\prime\prime}(x)\;\;\;\;\\\label{e13}
y_{11}(1)=0,\;y_{12}(1)=0\hspace{5.0cm}
\end{eqnarray}
$\vec y_2=(y_{21},y_{22})^T$ is the solution of (\ref{e14})-(\ref{e16}),
\begin{eqnarray}\label{e14}
\qquad \varepsilon_1y_{21}^{\prime\prime}(x)+a_1(x)y_{21}^{\prime}(x)-b_{11}(x)y_{21}(x)+b_{12}(x)y_{22}(x)=-\frac{\varepsilon_1}{\varepsilon_2}y_{11}^{\prime\prime}(x)\\\label{e15}
\qquad \varepsilon_2y_{22}^{\prime\prime}(x)+a_2(x)y_{22}^{\prime}(x)+b_{21}(x)y_{21}(x)-b_{22}(x)y_{22}(x)=-y_{12}^{\prime\prime}(x)\;\;\;\;\\\label{e16}
y_{21}(0)=p,\;y_{22}(0)=0,\;y_{21}(1)=0,\;y_{22}(1)=0.\hspace{2.1cm}
\end{eqnarray}
In (\ref{e16}), $p$ is a constant to be chosen such that $|p| \le C$.\\
\\From (\ref{e8})-(\ref{e13}), it is not hard to see that, for $0\le k \le 3,$
\begin{equation}\label{e17}
\|\vec{y_0}^{(k)}\| \le C,\;\;\|\vec {y_1}^{(k)}\| \le C.
\end{equation}
Now, consider the equations (\ref{e14})-(\ref{e16}) and using Lemma~\ref{L2}
\begin{equation}
\|\vec{y_2}\| \le C.
\end{equation}
Using the estimate (\ref{e5}) from Theorem~\ref{T1}, we get,
\begin{equation}\label{e20}
|y_{22}^{(k)} (x)|\le C\varepsilon_{2}^{-k}\;\; for\;\; k=1,2
\end{equation}
From (\ref{e14}),
\begin{equation}
\varepsilon_1y_{21}^{\prime\prime}+a_1(x)y_{21}^{\prime}(x)-b_{11}(x)y_{21}(x)=-\frac{\varepsilon_1}{\varepsilon_2}y_{11}^{\prime\prime}(x)-b_{12}(x)y_{22}(x).
\end{equation}
Decompose $y_{21}$ as $y_{21}(x)=z_0(x)+\varepsilon_1z_1(x)+\varepsilon_1^2z_2(x)$ with
\begin{eqnarray}\label{a1}
a_1(x)z_0^{\prime}(x)-b_{11}(x)z_0(x)=-\frac{\varepsilon_1}{\varepsilon_2}y_{11}^{\prime\prime}(x)-b_{12}(x)y_{22}(x), z_0(1)=0,\\\label{a2}
a_1(x)z_1^{\prime}(x)-b_{11}(x)z_1(x)=-z_0^{\prime\prime}(x), z_1(1)=0,\hspace{2.8cm}\\\label{a3}
\qquad \quad \varepsilon_1z_2^{\prime\prime}(x)+a_1(x)z_2^{\prime}(x)-b_{11}(x)z_2(x)=-z_1^{\prime\prime}(x), z_2(0)=0,z_2(1)=0.
\end{eqnarray}
Estimating $z_0\; \text{and}\; z_1$ from (\ref{a1}) \& (\ref{a2}) and using Chapter 8 of \cite{MOS96} for the problem (\ref{a3}), the following estimates hold for $0\le k\le3$,
$$|z_0^{(k)}|<C(1+\varepsilon_2^{(1-k)}),\; |z_1^{(k)}|<C(1+\varepsilon_2^{-2}\varepsilon_1^{2-k}),\;|z_2^{(k)}|<C(1+\varepsilon_2^{-2}\varepsilon_1^{-k})$$
Then $p=z_0(0)+\varepsilon_1 z_1(0)$ and for $k=0,1,2,$
\begin{equation}\label{e21}
|y_{21}^{(k)} (x)|\le C\varepsilon_{2}^{-2}\;,\;
|y_{21}^{(3)} (x)|\le C\varepsilon_{1}^{-1}\varepsilon_{2}^{-2}.
\end{equation}
Differentiating (\ref{e15}) once and using (\ref{e20}) and (\ref{e21}) 
\begin{equation}\label{e22}
|y_{22}^{(3)} (x)|\le C\varepsilon_{2}^{-3}.
\end{equation}
Hence, from (\ref{e17}) - (\ref{e20}) and (\ref{e21}) - (\ref{e22}),  the estimates of the components $v_1=y_{01}+\varepsilon_2y_{11}+\varepsilon_2^2y_{21}$ and $v_2=y_{02}+\varepsilon_2y_{12}+\varepsilon_2^2y_{22}$ of $\vec{v}$ are as follows.
\begin{eqnarray}\label{e27}
|v_1^{(k)}(x)|\le C, \;\; |v_2^{(k)}(x)|\le C\; \text{for}\; 0\le k \le2,\\ \label{e28}
|v_1^{(3)}(x)|\le C\varepsilon_1^{-1}, \;\; |v_2^{(3)}(x)|\le C\varepsilon_2^{-1}.\hspace{1.1cm}
\end{eqnarray}

\begin{theorem}\label{T2}
Let $ \vec{w}(x)$  be the solution of (\ref{ew}), then for x $\in \overline\Omega$, the following estiamates hold.
\begin{eqnarray}\label{e29}
 |w_{1}(x)|\le C \exp{\dfrac{-\alpha x}{\varepsilon_2}},\; |w_{2}(x)|\le C \exp{\dfrac{-\alpha x}{\varepsilon_2}},\hspace{2.3cm}\\\label{e30}
 |w_{1}^{(k)}(x)|\le C \Big(\varepsilon_1^{-k} \exp{\dfrac{-\alpha x}{\varepsilon_1}}+\varepsilon_2^{-k} \exp{\dfrac{-\alpha x}{\varepsilon_2}}\Big), \text{for}\; k=1,2,3,\\\label{e31}
 |w_{2}^{(k)}(x)|\le C\varepsilon_2^{-k} \exp{\dfrac{-\alpha x}{\varepsilon_2}},\;\text{for}\; k=1,2,\hspace{3.1cm}\\\label{e32}
 |w_{2}^{(3)}(x)|\le C\varepsilon_2^{-1} \Big(\varepsilon_1^{-1} \exp{\dfrac{-\alpha x}{\varepsilon_1}}+\varepsilon_2^{-2} \exp{\dfrac{-\alpha x}{\varepsilon_2}}\Big).\hspace{1.8cm}
\end{eqnarray}
\end{theorem}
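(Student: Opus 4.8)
The plan is to obtain every bound from the maximum principle of Lemma~\ref{L1}, the stability estimate of Lemma~\ref{L2}, and repeated use of the two scalar equations contained in (\ref{ew}),
\[
\varepsilon_1 w_1'' + a_1 w_1' - b_{11}w_1 + b_{12}w_2 = 0,\qquad \varepsilon_2 w_2'' + a_2 w_2' + b_{21}w_1 - b_{22}w_2 = 0,
\]
exploiting the decay from (\ref{e29}) once it is in hand. First I would prove the pointwise bounds (\ref{e29}). Since $\vec{w}(1)=\vec{0}$ and, by (\ref{e27}), $\vec{w}(0)=\vec{l}-\vec{v}(0)$ satisfies $\|\vec{w}(0)\|\le C$, the natural barrier is $\vec{B}(x)=C\exp(-\alpha x/\varepsilon_2)(1,1)^T$ with $C$ large. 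One checks $\vec{B}\pm\vec{w}\ge\vec{0}$ at $x=0,1$, and a direct computation of $L\vec{B}$ using $a_i\ge\alpha$, $b_{ii}-b_{ij}\ge\beta$, and — crucially — $\varepsilon_1<\varepsilon_2$ (so that $\varepsilon_1\alpha^2/\varepsilon_2^2-\alpha^2/\varepsilon_2\le 0$ in the first component) shows $L\vec{B}\le\vec{0}$ componentwise; since $L\vec{w}=\vec{0}$, Lemma~\ref{L1} applied to $\vec{B}\pm\vec{w}$ yields (\ref{e29}). Note that a single $\varepsilon_2$-barrier controls both components, so at the level of the functions themselves the sharper $\varepsilon_1$-layer in $w_1$ is invisible; it surfaces only in the derivatives.

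Next I would derive (\ref{e31}) for $w_2$. These follow by the bootstrapping device of Theorem~\ref{T1}, now carrying the exponential factor from (\ref{e29}). The mean value theorem on an interval of length $\varepsilon_2$ based at $x$ produces a nearby point where $|w_2'|\le C\varepsilon_2^{-1}\exp(-\alpha x/\varepsilon_2)$; integrating the expression for $w_2''$ from the second equation (integrating the $a_2w_2'$ term by parts) propagates this to every $x$, and since (\ref{e29}) controls $w_1$ on the $\varepsilon_2$-scale the forcing $b_{21}w_1$ contributes nothing sharper. The bound on $w_2''$ then comes from solving the second equation for $\varepsilon_2 w_2''$. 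This is exactly why (\ref{e31}) sees only the $\varepsilon_2$-layer for $k=1,2$.

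The heart of the proof is (\ref{e30}). Here I would regard the first equation as a scalar convection-diffusion problem $\varepsilon_1 w_1''+a_1w_1'-b_{11}w_1=-b_{12}w_2=:g$ in which, by (\ref{e29}) and (\ref{e31}), $|g^{(k)}|\le C\varepsilon_2^{-k}\exp(-\alpha x/\varepsilon_2)$ for the low orders needed. I would then decompose $w_1=S+T$, where $S$ absorbs the $\varepsilon_2$-scale response to $g$, with $|S^{(k)}|\le C\varepsilon_2^{-k}\exp(-\alpha x/\varepsilon_2)$, and $T$ is the genuine $\varepsilon_1$-boundary-layer correction, with $|T^{(k)}|\le C\varepsilon_1^{-k}\exp(-\alpha x/\varepsilon_1)$, the latter handled by the scalar layer theory of Chapter~8 of \cite{MOS96} together with an $\exp(-\alpha x/\varepsilon_1)$ barrier. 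Adding the two contributions gives (\ref{e30}) for $k=1,2,3$. I expect this to be the main obstacle: differentiating the scalar equation for $w_1$ directly would divide the $\varepsilon_2$-layer part by $\varepsilon_1$ and manufacture spurious cross-scale terms of size $\varepsilon_1^{-1}\varepsilon_2^{-2}\exp(-\alpha x/\varepsilon_2)$, strictly larger than the claimed $\varepsilon_2^{-3}\exp(-\alpha x/\varepsilon_2)$. Separating $S$ from $T$ is precisely what prevents the two scales from contaminating each other, and the coupling to $w_2$ forces the estimates to be proved in the order (\ref{e29}), (\ref{e31}), (\ref{e30}).

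Finally, (\ref{e32}) would follow by differentiating the second equation once and solving for $w_2^{(3)}$. Two terms dominate: $a_2w_2''/\varepsilon_2$, which by (\ref{e31}) gives the $\varepsilon_2^{-3}\exp(-\alpha x/\varepsilon_2)$ contribution, and $b_{21}w_1'/\varepsilon_2$, which via the $\varepsilon_1$-layer part of $w_1'$ now available from (\ref{e30}) injects the $\varepsilon_2^{-1}\varepsilon_1^{-1}\exp(-\alpha x/\varepsilon_1)$ term. Thus the $\varepsilon_1$-layer, absent from $w_2'$ and $w_2''$, reappears in $w_2^{(3)}$ exactly as recorded in (\ref{e32}).
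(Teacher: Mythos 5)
Your proposal is correct, and for (\ref{e32}) it coincides exactly with the paper: the paper also differentiates the second equation of (\ref{ew}) once, bounds $|\varepsilon_2 w_2^{(3)}|$ by $C(|w_2''|+|w_1'|)$ plus lower-order terms, and reads off the result from (\ref{e30})--(\ref{e31}). The real difference is that for (\ref{e29})--(\ref{e31}) the paper gives no argument at all---it simply cites Lemma 4 of \cite{ZC04}---whereas you supply a self-contained proof. Your three ingredients (the barrier $Ce^{-\alpha x/\varepsilon_2}(1,1)^T$, whose sign check rests on $a_i\ge\alpha$, $b_{ii}-b_{ij}\ge\beta$ and $\varepsilon_1<\varepsilon_2$; the mean-value bootstrapping of Theorem~\ref{T1} carried out on $\varepsilon_2$-intervals while propagating the exponential factor; and the splitting of the scalar problem $\varepsilon_1 w_1''+a_1w_1'-b_{11}w_1=-b_{12}w_2$ into an $\varepsilon_2$-scale response plus a pure $\varepsilon_1$-layer) are the standard route, and your diagnosis of why the splitting is unavoidable for (\ref{e30})---naive differentiation would manufacture cross terms of size $\varepsilon_1^{-1}\varepsilon_2^{-2}e^{-\alpha x/\varepsilon_2}$---is the right one. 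The only step you should write out with care is the claim $|S^{(k)}|\le C\varepsilon_2^{-k}e^{-\alpha x/\varepsilon_2}$ for the non-layer part: it does not follow from the stability lemma alone, but from an expansion $S=S_0+\varepsilon_1 S_1+\varepsilon_1^2 S_2$ in which $S_0,S_1$ solve first-order problems with terminal data at $x=1$ (so that $|S_0(x)|\le C\int_x^1 e^{-\alpha t/\varepsilon_2}\,dt\le C\varepsilon_2 e^{-\alpha x/\varepsilon_2}$, and similarly for the derivatives) and the remainder is absorbed into the $\varepsilon_1$-layer term; this is precisely the device the paper itself uses for $y_{21}$ in (\ref{a1})--(\ref{a3}), so you can mirror that computation. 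What your version buys is a proof that does not lean on an external lemma; what the paper's version buys is brevity.
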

\begin{proof}
Estimates (\ref{e29})-(\ref{e31}) follow from Lemma 4 of \cite{ZC04}.\\
From (\ref{ew}), we have\\
$$\varepsilon_2 w_2^{\prime \prime}(x) + a_2(x)w_2^\prime (x)+b_{21}(x)w_1(x)-b_{22}(x)w_2(x)=0.$$
Differentiating the above equation once,
$$| \varepsilon_2 w_2^{(3)}(x)| \le C\big(|w_2^{\prime\prime}(x)|+|w_1^\prime(x)|\big)$$
and hence,
$$|w_{2}^{(3)}(x)|\le C\varepsilon_2^{-1} \Big(\varepsilon_1^{-1} \exp{\dfrac{-\alpha x}{\varepsilon_1}}+\varepsilon_2^{-2} \exp{\dfrac{-\alpha x}{\varepsilon_2}}\Big).$$
\end{proof}
\subsection{Improved estimates for the bounds of the singular components}
 Let $B_1(x)$ and $B_2(x)$ be the layer functions defined on $[0,1]$ as follows
$$B_1(x)=\exp{\dfrac{-\alpha x}{\varepsilon_1}}, \;\; B_2(x)=\exp{\dfrac{-\alpha x}{\varepsilon_2}}.$$
Using the arguments similar to those used in Lemma 5 of \cite{PVM10}, it is not hard to see that there exists point $x_s \in (0,\frac{1}{2})$ such that
\begin{equation}\label{e33}
\frac{B_1(x_s)}{\varepsilon_1^s}=\frac{B_2(x_s)}{\varepsilon_2^s},\; s=1,2,3
\end{equation}
and
\begin{equation} \label{e34}
\frac{B_1(x)}{\varepsilon_1^s}>\frac{B_2(x)}{\varepsilon_2^s},\; \text{for}\; x\in[0,x_s),\;\;
\frac{B_1(x)}{\varepsilon_1^s}<\frac{B_2(x)}{\varepsilon_2^s},\; \text{for}\; x\in(x_s,1].
\end{equation}
Now the singular components $w_1(x)$ and $w_2(x)$ are decomposed as follows
\begin{equation}\label{ns}
w_1(x)=w_{11}(x)+w_{12}(x),\;\;\;w_2(x)=w_{21}(x)+w_{22}(x),
\end{equation}
where, $w_{11}, w_{12}, w_{21}$ and $w_{22}$ are defined by
\begin{equation}\label{e38}
\begin{split}
w_{11}(x) =
\begin{cases}
\begin{aligned}[b]
\displaystyle \sum_{k=0}^{3}\big((x-x_3)^k/k!\big)w_1^{(k)}(x_3),\; \text{for}\; x\in [0,x_3)
\end{aligned}
\\[1ex]
\begin{aligned}[b]
w_1(x),\qquad\qquad\qquad\qquad\quad \; \text{for}\; x\in [x_3,1]
\end{aligned}
\end{cases}
\end{split}
\end{equation}
\begin{equation}\label{e39}
w_{12}(x)=w_1(x)-w_{11}(x) \hspace{4.7cm}
\end{equation}
\begin{equation}\label{e40}
\begin{split}
w_{21}(x) =
\begin{cases}
\begin{aligned}[b]
\displaystyle \sum_{k=0}^{3}\big((x-x_1)^k/k!\big)w_2^{(k)}(x_1),\; \text{for}\; x\in [0,x_1)
\end{aligned}
\\[1ex]
\begin{aligned}[b]
w_2(x),\qquad\qquad\qquad\qquad\quad \; \text{for}\; x\in [x_1,1]
\end{aligned}
\end{cases}
\end{split}\;\;\;
\end{equation}
\begin{equation}\label{e41}
w_{22}(x)=w_2(x)-w_{21}(x). \hspace{4.7cm}
\end{equation}
\begin{lemma}\label{L3}
Let $w_{11}, w_{12}, w_{21}$ and $w_{22}$ are as defined in (\ref{e38})-(\ref{e41}), then for $x\in\overline\Omega$, the following estimates hold.
\begin{eqnarray}
|w_{11}^{(3)}(x)|\le C\varepsilon_2^{-3}B_2(x),\;\;\;|w_{12}^{\prime\prime}(x)|\le C\varepsilon_1^{-2}B_1(x), \\
|w_{21}^{(3)}(x)|\le C\varepsilon_2^{-3}B_2(x),\;\;\;|w_{22}^{\prime\prime}(x)|\le C\varepsilon_2^{-2}B_1(x).
\end{eqnarray}
\end{lemma}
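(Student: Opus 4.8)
The plan is to exploit the explicitly piecewise definitions (\ref{e38})--(\ref{e41}): on the layer side of the matching node each difference component vanishes ($w_{12}\equiv 0$ on $[x_3,1]$ and $w_{22}\equiv 0$ on $[x_1,1]$), while $w_{11}=w_1$ and $w_{21}=w_2$ there, and on the complementary intervals $w_{11},w_{21}$ are the cubic Taylor polynomials of $w_1,w_2$ about $x_3,x_1$. I would therefore treat each of the four estimates as two cases split at the relevant node, with the whole argument resting on the pointwise layer bounds of Theorem~\ref{T2} together with the crossover identities (\ref{e33}) and inequalities (\ref{e34}).

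For the third-derivative bounds on $w_{11}$ and $w_{21}$, on $[x_3,1]$ (resp.\ $[x_1,1]$) I would use $w_{11}^{(3)}=w_1^{(3)}$ (resp.\ $w_{21}^{(3)}=w_2^{(3)}$) and invoke (\ref{e30}) (resp.\ (\ref{e32})); since $x\ge x_3$ (resp.\ $x\ge x_1$), inequality (\ref{e34}) lets me dominate the $B_1$-term by the $B_2$-term, leaving $C\varepsilon_2^{-3}B_2(x)$. On the complementary interval the third derivative of a cubic polynomial is the constant $w_1^{(3)}(x_3)$ (resp.\ $w_2^{(3)}(x_1)$); I would bound this constant at the node by combining (\ref{e30})/(\ref{e32}) with the equality (\ref{e33}), which makes the two competing layer terms equal there, and then extend the bound to all $x$ left of the node using the monotone decay of $B_2$, i.e.\ $B_2(x_s)\le B_2(x)$.

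For the second-derivative bounds on $w_{12}$ and $w_{22}$ the layer side is trivial, so the work is on $[0,x_3)$ (resp.\ $[0,x_1)$). There I would use the Taylor-remainder representation $w_{12}^{\prime\prime}(x)=\int_{x_3}^{x}(x-t)\,w_1^{(4)}(t)\,dt$ (and likewise for $w_{22}$ with node $x_1$), obtaining the fourth-derivative bound by differentiating the system (\ref{ew}) twice and feeding in Theorem~\ref{T2}; this yields $|w_1^{(4)}|\le C(\varepsilon_1^{-4}B_1+\varepsilon_1^{-1}\varepsilon_2^{-3}B_2)$ and an analogous bound $|w_2^{(4)}|\le C(\varepsilon_1^{-2}\varepsilon_2^{-1}B_1+\varepsilon_2^{-4}B_2)$. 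The contribution of the $B_1$-terms integrates cleanly, since $\int(t-x)B_1(t)\,dt$ produces a factor $\varepsilon_1^{2}$, landing within the target bounds $C\varepsilon_1^{-2}B_1(x)$ and $C\varepsilon_2^{-2}B_1(x)$.

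The main obstacle is the $B_2$-contribution to these second-derivative bounds. A direct integration of $\varepsilon_1^{-1}\varepsilon_2^{-3}B_2(t)$ gives $C\varepsilon_1^{-1}\varepsilon_2^{-1}B_2(x)$, which is \emph{not} $\le C\varepsilon_1^{-2}B_1(x)$ on the part of $[0,x_3)$ to the right of $x_1$, since by (\ref{e34}) with $s=1$ the quantity $\varepsilon_1^{-1}\varepsilon_2^{-1}B_2(x)$ exceeds $\varepsilon_1^{-2}B_1(x)$ there. The resolution, and the crux of the lemma, is to reverse the order of operations: first convert the decay using (\ref{e34}) with the index $s$ matching the expansion node, namely $B_2(t)\le(\varepsilon_2/\varepsilon_1)^{3}B_1(t)$ on all of $[0,x_3)$ for $w_{12}$ and $B_2(t)\le(\varepsilon_2/\varepsilon_1)B_1(t)$ on $[0,x_1)$ for $w_{22}$, and only then integrate the resulting $B_1(t)$, which again supplies the factor $\varepsilon_1^{2}$. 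For $w_{12}$ this collapses the $B_2$-term to $C\varepsilon_1^{-2}B_1(x)$, and for $w_{22}$ it gives $C\varepsilon_1\varepsilon_2^{-3}B_1(x)\le C\varepsilon_2^{-2}B_1(x)$ using $\varepsilon_1<\varepsilon_2$. I expect matching the crossover index to the expansion node, rather than integrating $B_2$ first, to be the one genuinely delicate point.
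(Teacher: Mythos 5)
Your treatment of $w_{11}^{(3)}$ and $w_{21}^{(3)}$ is exactly the paper's: the two-case split at the node, equality (\ref{e33}) plus monotonicity of $B_2$ on the left, and inequality (\ref{e34}) on the right. For the second-derivative bounds on $w_{12}$ and $w_{22}$, however, you take a genuinely different route. The paper never touches fourth derivatives: it bounds $w_{12}^{(3)}=w_1^{(3)}-w_{11}^{(3)}$ on $[0,x_3)$ by $C\varepsilon_1^{-3}B_1(x)$ (using precisely the crossover with $s=3$ that you flag as the crux), notes that $w_{12}^{\prime\prime}$ vanishes on $[x_3,1]$, and recovers $w_{12}^{\prime\prime}(x)=-\int_x^1 w_{12}^{(3)}(t)\,dt$, a single integration supplying one factor of $\varepsilon_1$. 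You instead write $w_{12}^{\prime\prime}$ as the exact Taylor remainder $\int_{x_3}^x (x-t)w_1^{(4)}(t)\,dt$, which forces you to derive bounds on $w_1^{(4)}$ and $w_2^{(4)}$ that are nowhere in the paper (Theorem~\ref{T2} stops at the third derivative); your stated bounds for these are correct and obtainable by differentiating (\ref{ew}) twice, but they cost an extra degree of smoothness of the coefficients and an extra page of estimates, in exchange for two factors of $\varepsilon_1$ from the double integration rather than one. Both arguments hinge on the same delicate point you identify --- converting $B_2$-decay into $B_1$-decay via (\ref{e34}) with the index $s$ matched to the expansion node \emph{before} integrating --- so your proof is sound; it is simply less economical than the paper's, which gets by with one fewer derivative and one fewer integration.
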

\begin{proof}
For $x\in[0,x_3)$, by the definition of $w_{11}(x)$ and using (\ref{e30}) and (\ref{e33}),
$$|w_{11}^{(3)}(x)|=|w_1^{(3)}(x_3)|\le C\varepsilon_2^{-3}B_2(x_3)\le C\varepsilon_2^{-3}B_2(x).$$
For $x\in[x_3,1]$, by the definition of $w_{11}(x)$ and using (\ref{e30}) and (\ref{e34}),
$$|w_{11}^{(3)}(x)|=|w_1^{(3)}(x)|\le C\varepsilon_2^{-3}B_2(x).$$
Hence, 
\begin{equation}\label{e44}
|w_{11}^{(3)}(x)|\le C\varepsilon_2^{-3}B_2(x), \text{on}\; \overline{\Omega}.
\end{equation}
Similar arguments lead to,
\begin{equation}\label{e45}
|w_{21}^{(3)}(x)|\le C\varepsilon_2^{-3}B_2(x), \text{on}\; \overline{\Omega}.
\end{equation}
Using  (\ref{e39}), (\ref{e30}), (\ref{e44}) and  (\ref{e34}), it is not hard to see that, for $x\in[0,x_3)$,
$$|w_{12}^{(3)}(x)|\le|w_1^{(3)}(x)|+|w_{11}^{(3)}(x)|\le C\varepsilon_1^{-3}B_1(x).$$
Since $w_{12}^{\prime\prime}(1)=0$, it follows that for any $x\in [0,1]$,
$$|w_{12}^{\prime\prime}(x)|=\Big|\int_x^1w_{12}^{(3)}(t)dt\Big| \le C\int_x^1\varepsilon_1^{-3}B_1(t)dt\le C\varepsilon_1^{-2}B_1(x).$$
Hence, 
\begin{equation}\label{e44}
|w_{12}^{\prime\prime}(x)|\le  C\varepsilon_1^{-2}B_1(x), \text{on}\; \overline{\Omega}.
\end{equation}
Similar arguments lead to,
\begin{equation}\label{e45}
|w_{22}^{\prime\prime}(x)|\le C\varepsilon_2^{-2}B_1(x), \text{on}\; \overline{\Omega}.
\end{equation}
\end{proof}
Now consider the alternate decomposition of the singular component $w_1(x)$ as below.
\begin{equation}\label{ns1}
w_1(x)=w_{11}(x)+w_{12}(x),
\end{equation}
where $w_{11}$ and $w_{12}$ are defined by
\begin{equation}\label{e38a}
\begin{split}
w_{11}(x) =
\begin{cases}
\begin{aligned}[b]
\displaystyle \sum_{k=0}^{2}\big((x-x_2)^k/k!\big)w_1^{(k)}(x_2),\; \text{for}\; x\in [0,x_2)
\end{aligned}
\\[1ex]
\begin{aligned}[b]
w_1(x),\qquad\qquad\qquad\qquad\quad \; \text{for}\; x\in [x_2,1]
\end{aligned}
\end{cases}
\end{split}
\end{equation}
\begin{equation}\label{e39a}
w_{12}(x)=w_1(x)-w_{11}(x). \hspace{4.5cm}
\end{equation}
Then, arguments similar to Lemma~\ref{L3} lead to
\begin{equation}
|w_{11}^{\prime\prime}(x)|\le C\varepsilon_2^{-2}B_2(x),\;\;\;|w_{12}^{\prime}(x)|\le C\varepsilon_1^{-1}B_1(x).
\end{equation}
\section{Numerical Method}
\label{sec:4}
A piecewise uniform Shishkin mesh $\overline{\Omega}^N$ is defined on $[0,1]$, so as to resolve the layers in the neighbourhood of $x=0$. Let N denote the number of mesh elements which is taken to be a multiple of 4. The interval $[0,1]$ is divided into three subintervals $ [0,\tau_1],\;[\tau_1, \tau_2]\; \text{and}\; [\tau_2,1]$, where $\tau_1$ and $\tau_2$ are the transition parameters given by,
\begin{equation*}
\tau_2=\text{min}\Big\{\frac{1}{2},\frac{2\varepsilon_2}{\alpha} \ln N\Big\},\;\;\;\;\tau_1=\text{min}\Big\{\frac{\tau_2}{2},\frac{2\varepsilon_1}{\alpha} \ln N\Big\}.
\end{equation*}

 In each of the intervals $ [0,\tau_1], [\tau_1, \tau_2]$, $N/4$ mesh elements are placed and $N/2$ mesh elements are placed in the interval $[\tau_2, 1]$ so that the mesh is piecewise uniform. The mesh becomes uniform when  $\tau_2=1/2$ and  $\tau_1=\tau_2/2.$  \\

Let $H_1, H_2$ and $H_3$ denote the step sizes in the intervals $ [0,\tau_1],\;[\tau_1, \tau_2]\; \text{and}\; [\tau_2,1]$ respectively. Thus,
$$H_1=\dfrac{4\tau_1}{N},\;\; H_2=\dfrac{4(\tau_2-\tau_1)}{N}\;\;\text{and}\; H_3=\dfrac{2(1-\tau_2)}{N}.$$
Therefore the possible four Shishkin meshes are represented by $\overline{\Omega}^N=\{x_j\}^N_{j=0}, $ where,
\begin{equation*}
\begin{split}
x_j=
\begin{cases}
\begin{aligned}[b]
jH_1,\hspace{2.3cm}\text{if}\;\; 0\le j\le \frac{N}{4}
\end{aligned}
\\[2.0ex]
\begin{aligned}[b]
\tau_1 +(j-\frac{N}{4})H_2 ,\hspace{0.45cm}\text{if} \;\frac{N}{4} \le j \le \frac{N}{2}
\end{aligned}
\\[2.0ex]
\begin{aligned}[b]
\tau_2 +(j-\frac{N}{2})H_3 ,\;\quad\text{if}\; \frac{N}{2} \le j \le N.
\end{aligned}
\end{cases}
\end{split}
\end{equation*}
To resolve the layers, the mesh is constructed in such a way that it condenses at the inner regions where the layers are exhibited and is coarse in the outer region, away from the layers.

To solve the BVP  (\ref{e1})-(\ref{e2}) numerically the following upwind classical finite difference scheme is applied on the mesh $\overline\Omega^N$. 
\begin{eqnarray}\label{e52}
L^N\vec{U}(x_j)\equiv E\delta^2\vec{U}(x_j)+A(x_j)D^+\vec{U}(x_j)-B(x_j)\vec{U}(x_j)=\vec{f}(x_j),\\\label{e53}
\vec{U}(x_0)=\vec{l},\;\vec{U}(x_N)=\vec{r},\hspace{6.3cm}
\end{eqnarray}
where, $\vec{U}(x_j)=(U_1(x_j),U_2(x_j))^T$ and for $1\le j\le N-1,$\\
$$ D^+U(x_j)=\frac{U(x_{j+1})-U(x_j)}{h_{j+1}},\;\; D^-U(x_j)=\frac{U(x_j)-U(x_{j-1})}{h_j},$$
$$\delta^2U(x_j)=\dfrac{1}{\overline h_j}\Big(D^+U(x_j)-D^-U(x_j)\Big),$$
with
$$h_j=x_j-x_{j-1},\;\; \overline{h_j}=\frac{(h_j+h_{j+1})}{2} .$$

\section{Error Analysis}
In this section a discrete maximum principle, a discrete stability result and the first order convergence of the proposed numerical method are established.
\begin{lemma}\label{L4}(Discrete Maximum Principle)
Assume that the vector valued mesh function $\vec{\psi}(x_j)=(\psi_1(x_j),\psi_2(x_j))^T$ satisfies $\vec{\psi}(x_0) \ge \vec{0}$ and $\vec{\psi}(x_N)\ge \vec{0}$. Then $L^N\vec{\psi}(x_j) \le \vec0$ for $1\le j\le N-1$ implies that $\vec{\psi}(x_j) \ge \vec0$ for $0 \le j \le N.$
\end{lemma}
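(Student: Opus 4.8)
The plan is to imitate the proof of the continuous maximum principle (Lemma~\ref{L1}), replacing derivatives by their discrete analogues and the elementary calculus fact ``at an interior minimum the first derivative vanishes and the second is nonnegative'' by its mesh counterpart, and to argue by contradiction. First I would select a mesh point $x_k$ and an index $m\in\{1,2\}$ at which the global minimum of the two mesh functions is attained, so that $\psi_m(x_k)=\min_{1\le i\le 2,\,0\le j\le N}\psi_i(x_j)$; without loss of generality I take $m=1$, so $\psi_1(x_k)\le\psi_1(x_j)$ and $\psi_1(x_k)\le\psi_2(x_j)$ for all $j$. Assuming for contradiction that $\psi_1(x_k)<0$, and noting that $\vec\psi(x_0)\ge\vec 0$ and $\vec\psi(x_N)\ge\vec 0$ prevent the minimum from occurring at an endpoint, I conclude $1\le k\le N-1$, so that all interior difference operators are defined at $x_k$.

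Next I would examine the first component of the discrete operator,
\[
(L^N\vec\psi)_1(x_k)=\varepsilon_1\delta^2\psi_1(x_k)+a_1(x_k)D^+\psi_1(x_k)-b_{11}(x_k)\psi_1(x_k)+b_{12}(x_k)\psi_2(x_k),
\]
and check the sign of each term. Since $\psi_1(x_k)$ is a minimum we have $\psi_1(x_{k-1})\ge\psi_1(x_k)$ and $\psi_1(x_{k+1})\ge\psi_1(x_k)$, hence $D^-\psi_1(x_k)\le 0\le D^+\psi_1(x_k)$, which gives $\delta^2\psi_1(x_k)=\overline h_k^{-1}\bigl(D^+\psi_1(x_k)-D^-\psi_1(x_k)\bigr)\ge 0$; because $a_1(x_k)>0$ this also yields $a_1(x_k)D^+\psi_1(x_k)\ge 0$. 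For the zeroth-order part I would use $\psi_2(x_k)\ge\psi_1(x_k)$ together with $b_{12}>0$ and $b_{11}-b_{12}\ge\beta>0$ to estimate
\[
-b_{11}(x_k)\psi_1(x_k)+b_{12}(x_k)\psi_2(x_k)\ge-\bigl(b_{11}(x_k)-b_{12}(x_k)\bigr)\psi_1(x_k)\ge-\beta\,\psi_1(x_k)>0.
\]
Adding the three nonnegative contributions gives $(L^N\vec\psi)_1(x_k)>0$, contradicting the hypothesis $L^N\vec\psi(x_j)\le\vec 0$; hence $\psi_1(x_k)\ge 0$, the global minimum is nonnegative, and $\vec\psi(x_j)\ge\vec 0$ for all $j$. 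The case in which the minimising component is the second one is identical after interchanging the indices and invoking $b_{22}-b_{21}\ge\beta$.

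The only point that requires genuine care — and, I would stress, the reason the scheme is built with a forward difference — is the sign of the convective contribution: the upwind difference $D^+$ is the correct choice for $a_i>0$, so that $a_i(x_k)D^+\psi_i(x_k)\ge 0$ at a discrete minimum and $L^N$ is of positive type (an $M$-matrix). I expect the verification that $\delta^2\psi_1(x_k)\ge 0$ on the nonuniform Shishkin mesh, where $h_k\ne h_{k+1}$, to be the most delicate piece of book-keeping, but it follows purely from the one-sided sign information above and needs no mesh-regularity assumption.
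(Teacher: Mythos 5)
Your proposal is correct and follows essentially the same argument as the paper: locate the global minimum over both components, assume without loss of generality it is attained by $\psi_1$ at an interior point, and use the one-sided sign information $D^+\psi_1\ge 0$, $D^-\psi_1\le 0$ together with $b_{12}>0$ and $b_{11}-b_{12}\ge\beta$ to force $(L^N\vec\psi)_1>0$ there, a contradiction. The paper's version is merely terser, leaving the term-by-term sign check implicit.
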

\begin{proof}
Let $k_1$ and $k_2$ be such that $\psi_1(x_{k_1})=\displaystyle{\min_j\psi_1(x_j)}$ and $\psi_2(x_{k_2})=\displaystyle{\min_j\psi_2(x_j)}$. Without loss of generality, we assume that $\psi_1(x_{k_1}) \le \psi_2(x_{k_2})$ and suppose $\psi_1(x_{k_1}) < 0$.
Then, $k_1 \not\in\{0,N\},\; \psi_1(x_{k_1+1})-\psi_1(x_{k_1}) \ge 0$ and $\psi_1(x_{k_1}) - \psi(x_{k-1}) \le 0$, implies that $(L^N\vec{\psi})_1(x_{k_1}) > 0$, a contradiction.
Therefore, $\psi_1(x_{k_1}) \ge 0$ and hence, $\vec{\psi}(x_j) \ge \vec{0}$ for $0\le j \le N$.
\end{proof}
An immediate consequence of the above discrete maximum principle is the following discrete stability result.
\begin{lemma}\label{L5}(Discrete Stability Result)
If $\vec{\psi}(x_j)=(\psi_1(x_j),\psi_2(x_j))^T$ is any vector valued mesh function defined on $\overline\Omega^N$, then for $i=1,2$ and $0\le j \le N$,\\
$$|\psi_i(x_j)| \le max\Big\{\|\vec{\psi}(x_0)\|,\;\|\vec{\psi}(x_N)\|,\; \frac{1}{\beta}\;\|L^N\vec{\psi}\|_{\Omega^N}\Big\}.$$
\end{lemma}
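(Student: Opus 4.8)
The plan is to deduce the discrete stability estimate from the discrete maximum principle (Lemma~\ref{L4}) by a barrier (comparison function) argument, exactly mirroring the way Lemma~\ref{L2} would follow from Lemma~\ref{L1} in the continuous setting. First I would set
$$M=\max\Big\{\|\vec{\psi}(x_0)\|,\;\|\vec{\psi}(x_N)\|,\;\tfrac{1}{\beta}\|L^N\vec{\psi}\|_{\Omega^N}\Big\},$$
and introduce the two comparison mesh functions $\vec{\phi}^{\pm}(x_j)=M\vec{e}\pm\vec{\psi}(x_j)$, where $\vec{e}=(1,1)^T$. The aim is to check that each of $\vec{\phi}^{+}$ and $\vec{\phi}^{-}$ satisfies the hypotheses of Lemma~\ref{L4}; once $\vec{\phi}^{\pm}(x_j)\ge\vec{0}$ holds on the whole mesh, the two scalar inequalities $M\pm\psi_i(x_j)\ge0$ combine to give $|\psi_i(x_j)|\le M$, which is the assertion.

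Next I would dispatch the boundary conditions. At $x_0$ one has $\phi_i^{\pm}(x_0)=M\pm\psi_i(x_0)$, and since $M\ge\|\vec{\psi}(x_0)\|\ge|\psi_i(x_0)|$ this is nonnegative for $i=1,2$; the identical reasoning at $x_N$ uses $M\ge\|\vec{\psi}(x_N)\|$. Hence $\vec{\phi}^{\pm}(x_0)\ge\vec{0}$ and $\vec{\phi}^{\pm}(x_N)\ge\vec{0}$.

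The only substantive step is the interior inequality, and this is where I expect the single point of care. Because the vector $M\vec{e}$ is constant in $j$, both $\delta^2$ and $D^+$ annihilate it, so that $L^N(M\vec{e})(x_j)=-B(x_j)\,M\vec{e}$. Writing out the two components of $B(x_j)\vec{e}$ and invoking the hypothesis $b_{ii}(x)-b_{ij}(x)\ge\beta>0$ yields $(L^N(M\vec{e}))_i(x_j)=-M\big(b_{ii}(x_j)-b_{ij}(x_j)\big)\le-\beta M$. By linearity of $L^N$ it then follows that
$$(L^N\vec{\phi}^{\pm})_i(x_j)=-M\big(b_{ii}(x_j)-b_{ij}(x_j)\big)\pm(L^N\vec{\psi})_i(x_j)\le-\beta M+\|L^N\vec{\psi}\|_{\Omega^N}\le0,$$
where the last inequality is precisely the definition of $M$ via $M\ge\tfrac{1}{\beta}\|L^N\vec{\psi}\|_{\Omega^N}$. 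Thus $L^N\vec{\phi}^{\pm}(x_j)\le\vec{0}$ for $1\le j\le N-1$.

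Finally, applying the discrete maximum principle (Lemma~\ref{L4}) to each of $\vec{\phi}^{+}$ and $\vec{\phi}^{-}$ gives $\vec{\phi}^{\pm}(x_j)\ge\vec{0}$ throughout $\overline{\Omega}^N$, and the stated bound follows. The main obstacle, such as it is, is confined to the interior estimate: one must verify that $L^N$ acting on the constant barrier $M\vec{e}$ produces a term at least as negative as $-\beta M$, and this is exactly where the \emph{coupling} hypothesis $b_{ii}-b_{ij}\ge\beta$ (rather than merely $b_{ii}>0$) is indispensable, since it is what controls the off-diagonal entries $-b_{12},-b_{21}$ of $B$ in the row sums.
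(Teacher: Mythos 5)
Your proof is correct and is precisely the standard barrier-function argument the paper intends: the authors state Lemma~\ref{L5} as an ``immediate consequence'' of the discrete maximum principle without writing out the details, and your comparison functions $M\vec{e}\pm\vec{\psi}$ together with the row-sum estimate $b_{ii}-b_{ij}\ge\beta$ supply exactly those details. Nothing is missing.
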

\subsection{Error Estimate}
\label{sec:8}
\indent Analogous to the continuous case, the discrete solution $\vec{U}$ can be decomposed into $\vec{V}$ and $\vec{W}$ as defined below.
\begin{equation}\label{e50}
L^N\vec{V}(x_j) = \vec{f}(x_j), \;\text{for} \; 0 < j < N,\; \vec{V}(x_0)=\vec{v}(x_0),\; \vec{V}(x_N)=\vec{v}(x_N)\\
\end{equation}
\begin{equation}\label{e51}
L^N\vec{W}(x_j) = \vec{0}, \;\text{for} \; 0 < j < N,\; \vec{W}(x_0)=\vec{w}(x_0),\; \vec{W}(x_N)=\vec{w}(x_N)\;\;\;\;\\
\end{equation}
\begin{lemma}\label{L6}
Let $\vec{v}$ be the solution of (\ref{ev}) and $\vec{V}$ be the solution of (\ref{e50}), then
$$\|\vec{V}-\vec{v}\|_{\overline\Omega^N} \le CN^{-1}.$$
\end{lemma}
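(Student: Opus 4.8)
The plan is to use the standard consistency-plus-stability argument. I would first set $\vec{e} = \vec{V} - \vec{v}$ as a mesh function on $\overline{\Omega}^N$. Since the discrete smooth problem (\ref{e50}) is posed with the exact boundary data $\vec{V}(x_0) = \vec{v}(x_0)$ and $\vec{V}(x_N) = \vec{v}(x_N)$, the error vanishes at both endpoints. The discrete stability result of Lemma~\ref{L5} then reduces everything to the truncation error, since it yields $\|\vec{V} - \vec{v}\|_{\overline{\Omega}^N} \le \tfrac{1}{\beta}\|L^N\vec{e}\|_{\Omega^N}$. Using $L^N\vec{V} = \vec{f}$ from (\ref{e50}) and $L\vec{v} = \vec{f}$ from (\ref{ev}), the quantity to be bounded is $L^N\vec{e} = \vec{f} - L^N\vec{v} = (L - L^N)\vec{v}$.

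Next I would expand $(L - L^N)\vec{v}$ componentwise. The zeroth-order coupling term $-B(x)\vec{v}$ is a pure multiplication appearing identically in $L$ and $L^N$, so it cancels, and only the derivative approximations survive:
$$(L - L^N)v_i(x_j) = \varepsilon_i\big(v_i''(x_j) - \delta^2 v_i(x_j)\big) + a_i(x_j)\big(v_i'(x_j) - D^+ v_i(x_j)\big).$$
Taylor expansion on the piecewise-uniform (and therefore non-uniform across transition points) mesh gives the consistency estimates
$$|v_i''(x_j) - \delta^2 v_i(x_j)| \le C(x_{j+1} - x_{j-1})\|v_i^{(3)}\|, \qquad |v_i'(x_j) - D^+ v_i(x_j)| \le C h_{j+1}\|v_i''\|,$$
and these hold uniformly, including at the transition points $\tau_1, \tau_2$, because the third-derivative form of the second-difference error does not require $h_j = h_{j+1}$.

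I would then insert the smooth-component derivative bounds (\ref{e27})--(\ref{e28}) and use the observation that every step size is $O(N^{-1})$: the constraints $\tau_1 \le \tau_2/2$ and $\tau_2 \le 1/2$ give $H_1, H_2, H_3 \le CN^{-1}$, so $x_{j+1} - x_{j-1} \le CN^{-1}$ and $h_{j+1} \le CN^{-1}$ on the whole mesh. The crucial cancellation is in the diffusion term: with $\|v_i^{(3)}\| \le C\varepsilon_i^{-1}$ the prefactor $\varepsilon_i$ absorbs the $\varepsilon_i^{-1}$, so that
$$\varepsilon_i|v_i''(x_j) - \delta^2 v_i(x_j)| \le C\varepsilon_i(x_{j+1} - x_{j-1})\varepsilon_i^{-1} \le CN^{-1},$$
while the convection term obeys $a_i|v_i' - D^+ v_i| \le C h_{j+1}\|v_i''\| \le CN^{-1}$ because $\|v_i''\| \le C$. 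Adding the two contributions gives $\|L^N\vec{e}\|_{\Omega^N} \le CN^{-1}$, and the stability estimate above completes the argument.

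The main thing to watch, and the only real subtlety, is keeping the rate clean at $O(N^{-1})$ rather than $O(N^{-1}\ln N)$. This depends on using the third-derivative (not fourth-derivative) form of the second-difference consistency error, so that the single power of the mesh width pairs with the single power of $\varepsilon_i^{-1}$ and the diffusion weights cancel exactly, and on noting that, although the Shishkin transition parameters involve $\ln N$, the bounds $\tau_1 \le \tau_2/2 \le 1/4$ force all step sizes to be $O(N^{-1})$, so no logarithmic factor ever enters and the transition points need no separate treatment.
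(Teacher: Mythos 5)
Your proposal is correct and follows essentially the same route as the paper: cancel the zeroth-order terms, bound the truncation error $(L-L^N)\vec v$ via Taylor expansion using the smooth-component bounds (\ref{e27})--(\ref{e28}) together with $x_{j+1}-x_{j-1}\le CN^{-1}$, and conclude with the discrete stability result of Lemma~\ref{L5}. The extra care you take about the non-uniform mesh at the transition points and the pairing of $\varepsilon_i$ with $\|v_i^{(3)}\|\le C\varepsilon_i^{-1}$ is exactly what the paper's one-line truncation estimate implicitly relies on.
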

\begin{proof}
For $ 1\le j\le N-1$,
\begin{align*}
L^N(\vec{V}-\vec{v})(x_j)&=\vec{f}(x_j)-L^N\vec{v}(x_j)\\
&=(L-L^N)\vec{v}(x_j)\\
&=(\frac{d^2}{dx^2}-\delta^2)E\vec{v}(x_j)+(\frac{d}{dx}-D^+)A(x_j)\vec{v}(x_j)\\
&=
\begin{pmatrix}
\varepsilon_1(\frac{d^2}{dx^2}-\delta^2)v_1(x_j)+a_1(x_j)(\frac{d}{dx}-D^+)v_1(x_j)\\
\varepsilon_2(\frac{d^2}{dx^2}-\delta^2)v_2(x_j)+a_2(x_j)(\frac{d}{dx}-D^+)v_2(x_j)
\end{pmatrix}.
\end{align*}
By the standard local truncation used in the taylor expansions,
$$|\varepsilon_1(\frac{d^2}{dx^2}-\delta^2)v_1(x_j)+a_1(x_j)(\frac{d}{dx}-D^+)v_1(x_j)|\le C(x_{j+1}-x_{j-1})(\varepsilon_1\|v_1^{(3)}\|+\|v_1^{(2)}\|),$$
$$|\varepsilon_2(\frac{d^2}{dx^2}-\delta^2)v_2(x_j)+a_2(x_j)(\frac{d}{dx}-D^+)v_2(x_j)|\le C(x_{j+1}-x_{j-1})(\varepsilon_2\|v_2^{(3)}\|+\|v_2^{(2)}\|).$$
Since $(x_{j+1}-x_{j-1}) \le CN^{-1}$, using (\ref{e27}) and (\ref{e28}),
$$\|L^N(\vec{V}-\vec{v})\|_{\Omega^N} \le CN^{-1}.$$
Using Lemma~\ref{L5},
\begin{equation}\label{eev}
\|\vec{V}-\vec{v}\|_{\overline\Omega^N} \le CN^{-1}.
\end{equation}
\end{proof}
To estimate the error in the singular components , we consider the mesh functions $B_1^N(x_j)$ and $B_2^N(x_j)$  on $\overline\Omega^N$ defined by
$$B_1^N(x_j)=\displaystyle\prod_{i=1}^j(1+\frac{\alpha h_i}{2\varepsilon_1})^{-1} \;\text{and}\;B_2^N(x_j)=\displaystyle\prod_{i=1}^j(1+\frac{\alpha h_i}{2\varepsilon_2})^{-1}$$
with $B_1^N(x_0)=B_2^N(x_0)=1.$\\
It is to be observed that $B_1^N$ and $B_2^N$ are monotonically decreasing.
\begin{lemma}\label{L7}
The layer components $W_1$ and $W_2$ satisfy the following bounds on $\overline\Omega^N$.
$$|W_1(x_j)| \le C B^N_2(x_j)\; \text{and}\;\;|W_2(x_j)| \le C B^N_2(x_j).$$ 
\end{lemma}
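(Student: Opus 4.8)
The plan is to prove this by a barrier-function argument built on the discrete maximum principle (Lemma~\ref{L4}). The mesh function $B_2^N$ is tailor-made for the operator governing the $\varepsilon_2$-layer, so the first task is to establish the sign properties of its discrete derivatives, after which $\vec W$ is dominated by a suitable multiple of $B_2^N\,(1,1)^T$.

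First I would compute the discrete derivatives of $B_2^N$. From the product form $B_2^N(x_j)=\prod_{i=1}^j(1+\frac{\alpha h_i}{2\varepsilon_2})^{-1}$ one obtains directly
$$D^+B_2^N(x_j)=-\frac{\alpha\,B_2^N(x_j)}{2\varepsilon_2\big(1+\frac{\alpha h_{j+1}}{2\varepsilon_2}\big)}<0,\qquad D^-B_2^N(x_j)=-\frac{\alpha\,B_2^N(x_j)}{2\varepsilon_2}<0,$$
and hence $\delta^2B_2^N(x_j)>0$. The key algebraic fact is that, since $2\overline h_j=h_j+h_{j+1}\ge h_{j+1}$, a short computation gives
$$\varepsilon_2\delta^2B_2^N(x_j)+\alpha D^+B_2^N(x_j)=\frac{\alpha^2B_2^N(x_j)}{2\varepsilon_2\big(1+\frac{\alpha h_{j+1}}{2\varepsilon_2}\big)}\Big(\frac{h_{j+1}}{2\overline h_j}-1\Big)\le 0.$$

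Next I would apply $L^N$ to the vector mesh function $B_2^N\,(1,1)^T$ and show $L^N\big(B_2^N(1,1)^T\big)\le\vec0$ componentwise. For the second component this is immediate: using $a_2\ge\alpha$ together with $D^+B_2^N<0$, then $b_{22}-b_{21}\ge\beta>0$, one gets
$$\big(L^N(B_2^N(1,1)^T)\big)_2=\varepsilon_2\delta^2B_2^N+a_2D^+B_2^N-(b_{22}-b_{21})B_2^N\le -\beta B_2^N<0.$$
The first component is where the parameter ordering enters: since $\delta^2B_2^N>0$ and $\varepsilon_1<\varepsilon_2$ I may replace $\varepsilon_1$ by $\varepsilon_2$ to enlarge the diffusion term, and since $D^+B_2^N<0$ and $a_1\ge\alpha$ I may replace $a_1$ by $\alpha$; combined with the displayed inequality and $b_{11}-b_{12}\ge\beta$ this yields $\big(L^N(B_2^N(1,1)^T)\big)_1\le -\beta B_2^N<0$ as well.

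Finally I would introduce the barrier functions $\vec{\Psi}^\pm(x_j)=C\,B_2^N(x_j)(1,1)^T\pm\vec W(x_j)$. At $x_0$ we have $B_2^N(x_0)=1$ and $\vec W(x_0)=\vec w(0)=\vec l-\vec v(0)$, which is bounded by $C$ via the smooth-component estimate~(\ref{e27}); at $x_N$ we have $\vec W(x_N)=\vec w(1)=\vec0$ from~(\ref{ew}) and $B_2^N(x_N)>0$. Hence for $C$ large enough $\vec{\Psi}^\pm(x_0)\ge\vec0$ and $\vec{\Psi}^\pm(x_N)\ge\vec0$, while in the interior $L^N\vec{\Psi}^\pm=C\,L^N(B_2^N(1,1)^T)\pm L^N\vec W=C\,L^N(B_2^N(1,1)^T)\le\vec0$, using $L^N\vec W=\vec0$. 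Lemma~\ref{L4} then forces $\vec{\Psi}^\pm\ge\vec0$ on $\overline\Omega^N$, i.e. $|W_i(x_j)|\le C B_2^N(x_j)$ for $i=1,2$. The main obstacle is the componentwise verification that $L^N(B_2^N(1,1)^T)\le\vec0$; once the sign facts $D^+B_2^N<0$, $\delta^2B_2^N>0$ and the mesh inequality $h_{j+1}\le 2\overline h_j$ are established, the rest is routine. It is worth stressing that the single barrier $B_2^N$ (rather than $B_1^N$) controls both components precisely because of the standing assumption $\varepsilon_1<\varepsilon_2$, which lets the $\varepsilon_2$-layer function dominate the steeper $\varepsilon_1$-diffusion appearing in the first equation.
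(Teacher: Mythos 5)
Your proposal is correct and follows essentially the same route as the paper: the paper's proof uses exactly the barrier functions $\vec{\psi}^{\pm}(x_j)=C\big(B_2^N(x_j),B_2^N(x_j)\big)^T\pm\vec{W}(x_j)$ together with the discrete maximum principle, merely asserting $L^N\big(B_2^N(1,1)^T\big)\le\vec 0$ where you verify it explicitly. Your detailed computation of the discrete derivatives of $B_2^N$, the inequality $\varepsilon_2\delta^2 B_2^N+\alpha D^+B_2^N\le 0$, and the use of $\varepsilon_1<\varepsilon_2$ for the first component are all sound and simply fill in the step the paper leaves to the reader.
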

\begin{proof}
Consider the following vector valued mesh functions  on $\overline\Omega^N$, 
$$\vec{\psi}^{\pm}(x_j)=C\big(B^N_2(x_j), B^N_2(x_j)\big)^T \pm \vec{W}(x_j).$$
Then for sufficiently large C, $\vec\psi^{\pm}(x_0) \ge \vec{0}$, $ \vec\psi^{\pm}(x_N) \ge\vec{0}$ and\\
$$L^N \vec{\psi}^{\pm}(x_j) = CL^N
\begin{pmatrix}
B^N_2(x_j)\\
B^N_2(x_j)
\end{pmatrix} 
\le \vec{0}.$$
Using discrete maximum principle, we have $\vec{\psi}^{\pm}(x_j)\ge\vec{0}$ on $\overline\Omega^N,$ which implies that
$$|W_1(x_j)| \le C B^N_2(x_j)\; \text{and}\;|W_2(x_j)| \le C B^N_2(x_j).$$ 
\end{proof}

\begin{lemma}\label{L8}
Let $\vec{w}$ be the solution of (\ref{ew}) and $\vec{W}$ be the solution of (\ref{e51}), then
$$\|\vec{W}-\vec{w}\|_{\overline\Omega^N}  \le CN^{-1}\ln N.$$
\end{lemma}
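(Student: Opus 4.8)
The plan is to estimate the local truncation error $L^N(\vec W-\vec w)=(L-L^N)\vec w$ and then convert that estimate into the desired bound via the discrete maximum principle (Lemma~\ref{L4}) applied to a carefully chosen barrier function. As in the proof of Lemma~\ref{L6}, the $i$-th component of $(L-L^N)\vec w(x_j)$ is $\varepsilon_i(\tfrac{d^2}{dx^2}-\delta^2)w_i(x_j)+a_i(x_j)(\tfrac{d}{dx}-D^+)w_i(x_j)$, and the standard Taylor-expansion consistency estimate controls this by $C(x_{j+1}-x_{j-1})\big(\varepsilon_i\|w_i^{(3)}\|+\|w_i^{(2)}\|\big)$ over the local stencil. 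The role of the decompositions is now decisive: I would split each singular component into its coarse ($B_2$-scale) and fine ($B_1$-scale) sub-layers, $w_1=w_{11}+w_{12}$ and $w_2=w_{21}+w_{22}$, and substitute the layer-resolved bounds of Lemma~\ref{L3} together with the alternate decomposition, namely $|w_{i1}^{(3)}|\le C\varepsilon_2^{-3}B_2$, $|w_{i1}''|\le C\varepsilon_2^{-2}B_2$ for the coarse part and $|w_{12}''|\le C\varepsilon_1^{-2}B_1$, $|w_{12}'|\le C\varepsilon_1^{-1}B_1$ (and the analogous $\varepsilon_2$-weighted bounds for $w_{22}$) for the fine part. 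This lets the consistency error be controlled scale by scale instead of through the crude estimates of Theorem~\ref{T2}.

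The estimate is then carried out region by region on the piecewise-uniform mesh. On the coarse part $[\tau_2,1]$ I would not use consistency at all: since $\tau_2=\tfrac{2\varepsilon_2}{\alpha}\ln N$ forces $B_2(x_j)\le N^{-2}$ and $B_2^N(\tau_2)\le CN^{-1}$, the bound $|w_i(x_j)|\le CB_2(x_j)$ from Theorem~\ref{T2} together with Lemma~\ref{L7} gives $|(\vec W-\vec w)_i(x_j)|\le CN^{-1}$ directly by the triangle inequality and the monotonicity of $B_2^N$. On the two fine parts $[0,\tau_1]$ and $[\tau_1,\tau_2]$ the mesh widths satisfy $H_1\le C\varepsilon_1N^{-1}\ln N$ and $H_2\le C\varepsilon_2N^{-1}\ln N$; inserting these and the coarse-sublayer bounds into the consistency estimate, and bounding the fine sublayer directly by its second/first-derivative bounds, produces truncation errors of the form $CN^{-1}\ln N\,\varepsilon_1^{-1}B_1$ and $CN^{-1}\ln N\,\varepsilon_2^{-1}B_2$, in which the negative powers of $\varepsilon_i$ survive and prevent a naive application of the discrete stability result (Lemma~\ref{L5}).

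To absorb these layer-scale factors I would set up the barrier
$$\vec\psi^{\pm}(x_j)=CN^{-1}\ln N\big(1+B_1^N(x_j)+B_2^N(x_j)\big)(1,1)^T\pm(\vec W-\vec w)(x_j),$$
and verify $\vec\psi^{\pm}(x_0)\ge\vec{0}$, $\vec\psi^{\pm}(x_N)\ge\vec{0}$ and $L^N\vec\psi^{\pm}(x_j)\le\vec{0}$. The decisive computation is that on each uniform patch of the mesh the discrete layer functions satisfy $(\varepsilon_i\delta^2+a_iD^+)B_i^N(x_j)\le -c\,\varepsilon_i^{-1}B_i^N(x_{j+1})$, using $a_i\ge\alpha$ and $h_{j+1}=h_j$; hence $L^N$ applied to the $B_i^N$-part of the barrier generates exactly the $\varepsilon_i^{-1}$-weighted negativity needed to dominate the fine-region truncation terms, while the reaction matrix $-B(x_j)$, being diagonally dominant with $b_{ii}-b_{ij}\ge\beta$, contributes only further negativity and absorbs the $CN^{-1}\ln N\cdot 1$ part. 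The discrete maximum principle then yields $\|\vec W-\vec w\|_{\overline\Omega^N}\le CN^{-1}\ln N$.

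The main obstacle is precisely this barrier step. One must prove the sub-solution inequality for $B_i^N$ with the correct $\varepsilon_i^{-1}$ weight, including a separate argument at the transition points $x=\tau_1,\tau_2$ where $h_j\neq h_{j+1}$ and the clean uniform-patch cancellation fails, and one must control the continuous layer functions by the discrete ones, i.e. establish $B_i(x_j)\le CB_i^N(x_j)$ on the mesh so that truncation bounds written in terms of $B_i$ can be matched against a barrier written in terms of $B_i^N$. Coordinating the $\varepsilon_1$- and $\varepsilon_2$-scales for the coupled first component $w_1=w_{11}+w_{12}$, whose two sub-layers must be dominated simultaneously, is where the two-parameter structure makes the bookkeeping most delicate.
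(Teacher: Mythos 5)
Your overall architecture matches the paper's: Taylor consistency estimates fed by the sub-layer decompositions of Lemma~\ref{L3} and the alternate decomposition (\ref{ns1}), the outer region $[\tau_2,1]$ handled by the triangle inequality with Lemma~\ref{L7} and the bounds $B_2(\tau_2)\le N^{-1}$, $B_2^N(\tau_2)\le CN^{-1}$, and the inner region by a barrier function plus the discrete maximum principle. The gap is the barrier itself. On $[\tau_1,\tau_2)$ with $\varepsilon_2>2\varepsilon_1$ the fine sub-layers contribute terms bounded only by $C\varepsilon_1^{-1}B_1(x_{j-1})$ and $C\varepsilon_2^{-1}B_1(x_{j-1})$ \emph{without} any factor of $N^{-1}\ln N$ (this is (\ref{e64}): since $H_2$ need not be small compared with $\varepsilon_1$, the only available estimate is $|\varepsilon_1(\tfrac{d^2}{dx^2}-\delta^2)w_{12}(x_j)|\le C\varepsilon_1\|w_{12}''\|\le C\varepsilon_1^{-1}B_1(x_{j-1})$). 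These terms can be as large as $C\varepsilon_i^{-1}N^{-2}$, which exceeds the flat negativity $-c\beta N^{-1}\ln N$ produced by the constant part of your barrier as soon as $\varepsilon_i\le N^{-1}/\ln N$; and the $CN^{-1}\ln N\,B_1^N$ part cannot help, because past $\tau_1$ the function $B_1^N$ decays by the factor $(1+\alpha H_2/2\varepsilon_1)^{-1}$ per step, so it cannot deliver $\varepsilon_1^{-1}$-weighted negativity matched against the continuous $B_1(x_{j-1})$ there. This is exactly why the paper's $\phi_1$ on $[\tau_1,\tau_2]$ carries $B_1^N(x_j)$ with an $O(1)$ coefficient (still of size $O(N^{-1})$ there since $B_1^N(\tau_1)\le CN^{-1}$), why $\phi_2$ carries the piecewise-linear term $CN^{-1}\bigl((\tau_2-x_j)\varepsilon_2^{-1}+1\bigr)$ whose convective difference supplies a flat $-C\alpha N^{-1}\varepsilon_2^{-1}$, and why the maximum principle is applied only on $0\le j\le N/2$ with the value at $x_{N/2}=\tau_2$ controlled by the outer-region estimate. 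You correctly identify the barrier step as the crux, but the barrier you write down would not close it.

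A second, smaller omission is the case analysis. When $\tau_2=\tfrac12$ the mesh is uniform and your outer-region argument collapses ($B_2(\tau_2)$ is then $O(1)$, not $O(N^{-1})$); the paper treats this case first and separately, exploiting $\varepsilon_1^{-1},\varepsilon_2^{-1}\le C\ln N$ and a different barrier built from discrete exponentials $Y_j,Z_j$ associated with a parameter $0<\gamma<\alpha$ and carrying amplitudes $\varepsilon_i^{-1}$ (which are only $O(\ln N)$ in that regime). Likewise, on $[\tau_1,\tau_2)$ the paper separates $\varepsilon_2\le 2\varepsilon_1$, where the crude bounds of Theorem~\ref{T2} already yield an estimate of the form (\ref{e65}) because $\tau_2\le(4\varepsilon_1/\alpha)\ln N$, from $\varepsilon_2>2\varepsilon_1$, where the sub-layer splitting is actually needed; it is only in the latter sub-case that the problematic $\varepsilon_i^{-1}B_1$ terms arise.
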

\begin{proof}
By the standard local truncation used in the Taylor expansions,
$$|\varepsilon_1(\frac{d^2}{dx^2}-\delta^2)w_1(x_j)+a_1(x_j)(\frac{d}{dx}-D^+)w_1(x_j)|\le C(x_{j+1}-x_{j-1})(\varepsilon_1\|w_1^{(3)}\|+\|w_1^{(2)}\|)$$
$$|\varepsilon_2(\frac{d^2}{dx^2}-\delta^2)w_2(x_j)+a_2(x_j)(\frac{d}{dx}-D^+)w_2(x_j)|\le C(x_{j+1}-x_{j-1})(\varepsilon_2\|w_2^{(3)}\|+\|w_2^{(2)}\|)$$
where the norm is taken over the interval ${[x_{j-1},x_{j+1} ]}$.\\

For the case $\tau_2=1/2$ and $\tau_1=1/4$, the mesh is uniform, $h=N^{-1}$,  $\varepsilon_1^{-1}\le C \ln N$ and $\varepsilon_2^{-1}\le C \ln N$ and thus we obtain,
\begin{equation}\label{w64}
|L^N(\vec{W}-\vec{w})(x_j)|\le
\begin{pmatrix}
CN^{-1}\big(\varepsilon_1^{-2}B_1(x_{j-1})+\varepsilon_2^{-2}B_2(x_{j-1})\big)\;\;\;\;\;\\
CN^{-1}\big(\varepsilon_1^{-1}\varepsilon_2^{-1}B_1(x_{j-1})+\varepsilon_2^{-2}B_2(x_{j-1})\big)
\end{pmatrix}.
\end{equation}
Consider the following barrier function $\vec\phi$ given by
$$\phi_1(x_j)=\frac{CN^{-1}}{\gamma(\alpha-\gamma)}\Big(\exp(2\gamma h/\varepsilon_1)\varepsilon_1^{-1}Y_j+\exp(2\gamma h/\varepsilon_2)\varepsilon_2^{-1}Z_j\Big)$$
$$\phi_2(x_j)=\frac{CN^{-1}}{\gamma(\alpha-\gamma)}\Big(\exp(2\gamma h/\varepsilon_2)\varepsilon_1^{-1}Z_j\Big)\hspace{3.3cm}$$
where $\gamma$ is a constant such that $0<\gamma< \alpha$,
$$Y_j=\dfrac{\lambda^{N-j}-1}{\lambda^N-1}\; \text{with}\; \lambda=1+\frac{\gamma h}{\varepsilon_1} $$
and
$$Z_j=\dfrac{\Lambda^{N-j}-1}{\Lambda^N-1}\; \text{with}\; \Lambda=1+\frac{\gamma h}{\varepsilon_2}.$$
It is not hard to see that
$$0\le Y_j, Z_j \le 1,$$
$$ (\varepsilon_1\delta^2+\gamma D^+)Y_j=0,\;\; (\varepsilon_2\delta^2+\gamma D^+)Z_j=0, $$
$$ D^+Y_j\le-\frac{\gamma}{\varepsilon_1}\exp(-\gamma x_{j+1}/\varepsilon_1),\;\; D^+Z_j\le-\frac{\gamma}{\varepsilon_2}\exp(-\gamma x_{j+1}/\varepsilon_2).$$
Hence, 
\begin{align}\nonumber
(L^N\vec\phi)(x_j)&\le\dfrac{CN^{-1}}{\gamma(\alpha-\gamma)}
\begin{pmatrix}
\varepsilon_1^{-1}\exp(2\gamma h/\varepsilon_1)D^+Y_j+\varepsilon_2^{-1}\exp(2\gamma h/\varepsilon_2)D^+Z_j\\
\varepsilon_1^{-1}\exp(2\gamma h/\varepsilon_2)(a_2-\gamma)D^+Z_j
\end{pmatrix}\\\label{w67}
&\le -CN^{-1}
\begin{pmatrix}
\varepsilon_1^{-2}B_1(x_{j-1})+\varepsilon_2^{-2}B_2(x_{j-1})\\
\varepsilon_1^{-1}\varepsilon_2^{-1}B_1(x_{j-1})
\end{pmatrix}
\end{align}
Consider the discrete functions
$$\vec{\psi}^{\pm}(x_j)=\vec{\phi}(x_j)\pm (\vec{W}-\vec{w})(x_j), x_j \in \overline\Omega^N.$$
Then for sufficiently large C, using (\ref{w64}) and (\ref{w67}),  $\vec{\psi}^{\pm}(x_0) > \vec{0}$, $\vec{\psi}^{\pm}(x_N)=\vec{0}$ and $L^N\vec{\psi}^{\pm}(x_j)\le\vec{0}$ on $\Omega^N$.\\
Using discrete maximum principle, $\vec{\psi}^{\pm}(x_j)\ge\vec{0}$ on $\overline\Omega^N$. Hence,\\
$$|(\vec{W}-\vec{w})(x_j)|\le
\begin{pmatrix}
CN^{-1}\big(\varepsilon_1^{-1}+\varepsilon_2^{-1}\big)\\
CN^{-1}\big(\varepsilon_1^{-1}\big)
\end{pmatrix}
\le
\begin{pmatrix}
CN^{-1}\ln N\\
CN^{-1}\ln N
\end{pmatrix}
$$
implies that
\begin{equation}\label{e57}
\|(\vec{W}-\vec{w})\|_{\overline\Omega^N}  \le CN^{-1}\ln N.
\end{equation}
For other choices of $\tau_1$ and $\tau_2$, estimate of $\|(\vec{W}-\vec{w})\|_{\overline\Omega^N} $ is as follows.\\
\\
Let $\overline{\Omega}_1^N=\big\{x_j\big\}^{N/2}_{j=0}$ and $\overline{\Omega}_2^N=\big\{x_j\big\}^{N}_{j=N/2}$, then for $x_j \in \overline{\Omega}_2^N$, using Lemma~\ref{L7} and Theorem~\ref{T2},
\begin{align*}
|(W_1-w_1)(x_j)|&\le|W_1(x_j)|+|w_1(x_j)|\le CB_2^N(x_j)+CB_2(x_j)\\
&\le CB_2^N(\tau_2)+CB_2(\tau_2).
\end{align*}
$$B_2(\tau_2)=\exp{(\dfrac{-\alpha \tau_2}{\varepsilon_2})}\le\exp{(-\ln N)}\le N^{-1}.$$
\begin{align*}
B_2^N(\tau_2)&=\displaystyle\prod_{i=1}^j\big(1+\frac{\alpha h_i}{2\varepsilon_2}\big)^{-1}\\
&=\Big(1+\frac{\alpha H_1}{2\varepsilon_2}\Big)^{\frac{-N}{4}}\Big(1+\frac{\alpha H_2}{2\varepsilon_2}\Big)^{\frac{-N}{4}}\\
&=\Big(1+\frac{2\alpha\tau_1}{N\varepsilon_2}\Big)^{\frac{-N}{4}}\Big(1+\frac{2\alpha(\tau_2-\tau_1)}{N\varepsilon_2}\Big)^{\frac{-N}{4}}\\
&\le \Big(1+\frac{2\alpha\tau_2}{N\varepsilon_2}\Big)^{\frac{-N}{4}}\\
B_2^N(\tau_2)&\le N^{-1}.
\end{align*}
Hence, $|(W_1-w_1)(x_j)| \le CN^{-1}.$\\
Similarly, it is true that $|(W_2-w_2)(x_j)| \le CN^{-1}$ and hence,
\begin{equation}\label{e58}
\|(\vec{W}-\vec{w})\|_{\overline{\Omega}_2^N} \le CN^{-1}.
\end{equation}
For $N/4 \le j < N/2$, if $\varepsilon_2/2\le\varepsilon_1 \le \varepsilon_2 $, then $\tau_2\le (4\varepsilon_1/\alpha) \ln N$ implies that
\begin{equation}
|L^N(\vec{W}-\vec{w})(x_j)|\le CN^{-1}\ln N
\begin{pmatrix}
\varepsilon_1^{-1}B_1(x_{j-1})+\varepsilon_2^{-1}B_2(x_{j-1})\\
\varepsilon_2^{-1}B_2(x_{j-1})
\end{pmatrix}.
\end{equation}
On the other hand, if $\varepsilon_2 > 2 \varepsilon_1 $, then using (\ref{ns}) ,
\begin{equation*}
\begin{pmatrix}
|\varepsilon_1(\frac{d^2}{dx^2}-\delta^2)w_1(x_j)|\\
|\varepsilon_2(\frac{d^2}{dx^2}-\delta^2)w_2(x_j)|
\end{pmatrix}
\le
\begin{pmatrix}
|\varepsilon_1(\frac{d^2}{dx^2}-\delta^2)w_{11}(x_j)|\\
|\varepsilon_2(\frac{d^2}{dx^2}-\delta^2)w_{21}(x_j)|
\end{pmatrix}
+
\begin{pmatrix}
|\varepsilon_1(\frac{d^2}{dx^2}-\delta^2)w_{12}(x_j)|\\
|\varepsilon_2(\frac{d^2}{dx^2}-\delta^2)w_{22}(x_j)|
\end{pmatrix}.
\end{equation*}
Also, by the standard local truncation used in the Taylor expansions and using Lemma~\ref{L3},
\begin{align*}
\begin{pmatrix}
|\varepsilon_1(\frac{d^2}{dx^2}-\delta^2)w_{11}(x_j)|\\
|\varepsilon_2(\frac{d^2}{dx^2}-\delta^2)w_{21}(x_j)|
\end{pmatrix}
&\le
\begin{pmatrix}
C\varepsilon_1(x_{j+1}-x_{j-1})\|w_{11}^{(3)}\|\\
C\varepsilon_2(x_{j+1}-x_{j-1})\|w_{21}^{(3)}\|
\end{pmatrix}\\
&\le C\varepsilon_2^{-1}N^{-1}\ln N
\begin{pmatrix}
B_2(x_{j-1})\\
B_2(x_{j-1})
\end{pmatrix},\\
\begin{pmatrix}
|\varepsilon_1(\frac{d^2}{dx^2}-\delta^2)w_{12}(x_j)|\\
|\varepsilon_2(\frac{d^2}{dx^2}-\delta^2)w_{22}(x_j)|
\end{pmatrix}
&\le C
\begin{pmatrix}
\varepsilon_1\|w_{12}^{\prime\prime}\|_{[x_{j-1}, x_{j+1}]}\\
\varepsilon_2\|w_{22}^{\prime\prime}\|_{[x_{j-1}, x_{j+1}]}
\end{pmatrix}\\
&\le C
\begin{pmatrix}
\varepsilon_1^{-1}B_1(x_{j-1})\\
\varepsilon_2^{-1}B_1(x_{j-1})
\end{pmatrix}
\end{align*}
Thus, for $N/4 \le j < N/2$,\\
\begin{equation}\label{d1}
\begin{pmatrix}
|\varepsilon_1(\frac{d^2}{dx^2}-\delta^2)w_1(x_j)|\\
|\varepsilon_2(\frac{d^2}{dx^2}-\delta^2)w_2(x_j)|
\end{pmatrix}
\le
\begin{pmatrix}
C\varepsilon_2^{-1}N^{-1}\ln NB_2(x_{j-1})+C\varepsilon_1^{-1}B_1(x_{j-1})\\
C\varepsilon_2^{-1}N^{-1}\ln NB_2(x_{j-1})+C\varepsilon_2^{-1}B_1(x_{j-1})
\end{pmatrix}.
\end{equation}
Using the alternate decomposition of $w_1(x)$ given in (\ref{ns1}) and the arguments similar to the above, it is not hard to verify that for $N/4 \le j < N/2$,
\begin{equation}\label{d2}
\begin{pmatrix}
|(\frac{d}{dx}-D^+)w_1(x_j)|\\
|(\frac{d}{dx}-D^+)w_2(x_j)|
\end{pmatrix}
\le
\begin{pmatrix}
C\varepsilon_2^{-1}N^{-1}\ln NB_2(x_{j-1})+C\varepsilon_1^{-1}B_1(x_{j-1})\\
C\varepsilon_2^{-1}N^{-1}\ln NB_2(x_{j-1})+C\varepsilon_2^{-1}B_1(x_{j-1})
\end{pmatrix}.
\end{equation}
Hence, for $N/4 \le j < N/2$, expressions (\ref{d1}) \& (\ref{d2}) yield\\
\begin{equation}\label{e64}
|L^N(\vec{W}-\vec{w})(x_j)|\le
\begin{pmatrix}
C\varepsilon_2^{-1}N^{-1}\ln NB_2(x_{j-1})+C\varepsilon_1^{-1}B_1(x_{j-1})\\
C\varepsilon_2^{-1}N^{-1}\ln NB_2(x_{j-1})+C\varepsilon_2^{-1}B_1(x_{j-1})
\end{pmatrix}.
\end{equation}
For $0 < j < N/4$, $\tau_1\le (\varepsilon_1/\alpha)\ln N$ and hence
\begin{equation}\label{e65}
|L^N(\vec{W}-\vec{w})(x_j)|\le CN^{-1}\ln N
\begin{pmatrix}
\varepsilon_1^{-1}B_1(x_{j-1})+\varepsilon_2^{-1}B_2(x_{j-1})\\
\varepsilon_2^{-1}B_2(x_{j-1})
\end{pmatrix}.
\end{equation}
Consider the following barrier functions for $0<j<N/4$
\begin{eqnarray}
\;\qquad\phi_1(x_j)=CN^{-1}\ln N \big(\exp (2\alpha H_1/\varepsilon_1)B_1^N(x_j)+\exp (2\alpha H_1/\varepsilon_2)B_2^N(x_j)\big)\\
\phi_2(x_j)=CN^{-1}\ln N \exp (2\alpha H_1/\varepsilon_2)B_2^N(x_j)\hspace{4.0cm}
\end{eqnarray}
and for  $N/4\le j\le N/2$,
\begin{eqnarray}
\phi_1(x_j)=CN^{-1}\ln N \exp (2\alpha H_2/\varepsilon_2)B_2^N(x_j)+C B_1^N(x_j)\hspace{2.3cm}\\
\qquad \phi_2(x_j)=CN^{-1}\ln N \exp (2\alpha H_2/\varepsilon_2)B_2^N(x_j)+CN^{-1}\big((\tau_2-x_j)\varepsilon_2^{-1}+1\big)
\end{eqnarray}
Let $\vec\phi=(\phi_1,\phi_2)^T$ and consider the following vector valued mesh functions, for \;$0 \le j \le N/2$, 
$$\vec{\psi}^{\pm}(x_j)=\vec\phi(x_j) \pm (\vec{W}-\vec{w})(x_j).$$
For sufficiently large C,
$$\vec{\psi}^{\pm}(x_0)\ge \vec{0},\; \vec{\psi}^{\pm}(x_{\frac{N}{2}})\ge \vec{0}\; \text{and}\; L^N\vec{\psi}^{\pm}(x_j)\le \vec{0}, \text{for}\; 0 < j < N/2.$$
Then by Lemma~\ref{L4} $\vec{\psi}^{\pm}(x_j)\ge\vec{0}$  for $0 \le j \le N/2.$
Hence,
\begin{equation}\label{e69}
\|(\vec{W}-\vec{w})\|_{\overline{\Omega}_1^N}  \le CN^{-1}\ln N.
\end{equation}
Therefore, for any choice of $\tau_1$ and $\tau_2$,
\begin{equation}\label{e70}
\|(\vec{W}-\vec{w})\|_{\overline\Omega^N}  \le CN^{-1}\ln N.
\end{equation}
\end{proof}
\begin{theorem}\label{T4}
Let $\vec{u}$ be the solution of the problem  (\ref{e1})-(\ref{e2}) and $\vec{U}$ be the solution of the problem (\ref{e52})-(\ref{e53}), then,
\begin{equation*}
\|(\vec{u}-\vec{U})\|_{\overline\Omega^N} \le 
CN^{-1} \ln N.
\end{equation*}
\end{theorem}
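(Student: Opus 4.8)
The plan is to exploit the matching decompositions of the continuous and discrete solutions and reduce everything to the two error lemmas already established. Since $\vec{u}=\vec{v}+\vec{w}$ by the Shishkin decomposition and $\vec{U}=\vec{V}+\vec{W}$ by the analogous discrete decomposition (\ref{e50})--(\ref{e51}), and since the boundary data were chosen so that $\vec{V}(x_0)=\vec{v}(x_0)$, $\vec{V}(x_N)=\vec{v}(x_N)$, $\vec{W}(x_0)=\vec{w}(x_0)$, $\vec{W}(x_N)=\vec{w}(x_N)$, the global error splits cleanly as
\[
\vec{u}-\vec{U}=(\vec{v}-\vec{V})+(\vec{w}-\vec{W}).
\]
First I would record this identity, noting that it is well-defined on the mesh because both decompositions are linear and the component boundary values agree at $x_0$ and $x_N$.

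Next I would apply the triangle inequality in the discrete maximum norm $\|\cdot\|_{\overline\Omega^N}$ to obtain
\[
\|\vec{u}-\vec{U}\|_{\overline\Omega^N}\le\|\vec{v}-\vec{V}\|_{\overline\Omega^N}+\|\vec{w}-\vec{W}\|_{\overline\Omega^N}.
\]
For the smooth part I would invoke Lemma~\ref{L6}, which gives $\|\vec{v}-\vec{V}\|_{\overline\Omega^N}\le CN^{-1}$, and for the singular part I would invoke Lemma~\ref{L8}, which gives $\|\vec{w}-\vec{W}\|_{\overline\Omega^N}\le CN^{-1}\ln N$. Combining these and absorbing the $CN^{-1}$ term into the larger $CN^{-1}\ln N$ term yields the claimed bound $\|\vec{u}-\vec{U}\|_{\overline\Omega^N}\le CN^{-1}\ln N$, establishing first-order (up to a logarithmic factor) parameter-uniform convergence.

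The main point to note is that this theorem carries no genuine obstacle of its own: all of the difficulty has already been discharged in the preceding analysis. In particular, the delicate work lies entirely inside Lemma~\ref{L8}, where the singular-component error must be controlled across the several regimes of the transition parameters $\tau_1,\tau_2$ and the two cases $\varepsilon_2/2\le\varepsilon_1\le\varepsilon_2$ versus $\varepsilon_2>2\varepsilon_1$, using the sharpened layer-component bounds of Lemma~\ref{L3} together with the tailored barrier functions built from the mesh layer functions $B_1^N,B_2^N$. Given those lemmas, the present proof is a two-line consequence of linearity and the triangle inequality, and I would present it as such rather than reproving any estimate.
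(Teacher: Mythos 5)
Your proposal is correct and is exactly the argument the paper gives: the paper's proof of Theorem~\ref{T4} is the one-line combination of the triangle inequality with the smooth-component bound (\ref{eev}) from Lemma~\ref{L6} and the singular-component bound (\ref{e70}) from Lemma~\ref{L8}, using the decomposition $\vec{u}-\vec{U}=(\vec{v}-\vec{V})+(\vec{w}-\vec{W})$. Your added remark that all the real work lives in Lemma~\ref{L8} is accurate and consistent with the paper's presentation.
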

\begin{proof}
The result follows by using triangle inequality, (\ref{eev}) and (\ref{e70}).
\end{proof}
\section{Numerical Illustrations}
\label{sec:9}
\begin{example}\label{ex1}
Consider the boundary value problem for the system of convection diffusion equations on (0,1)
\begin{eqnarray}\label{ss1}
\varepsilon_1 u_1^{\prime \prime}(x)+(1+x^2)u_1^\prime (x)-(4+\sin x)u_1(x)+2u_2(x)=-e^x,\\
\label{ss2}
\varepsilon_2 u_2^{\prime \prime}(x)+(2+x)u_2^\prime (x)+u_1(x)-(2+\cos x)u_2(x)=-x^2,\\ \label{ss3}
 \text{with}\; u_1(0)  =3,\;u_2(0)=3,\; u_1(1)  =1,\;u_2(1)=1.\hspace{2.0cm}
\end{eqnarray}
\end{example}
The above problem is solved using the suggested numerical method and  plot of the approximate solution for $N=1024, \varepsilon_1=5^{-4}, \varepsilon_2=2^{-7}$ is shown in Figure~\ref{f1}. Parameter uniform error and order of convergence of the numerical method are shown in Table~\ref{tb1} which are computed using two mesh algorithm, a variant of the one suggested in \cite{FHM00}.\\
\begin{figure}[h]
\includegraphics[width=0.50\textwidth, angle=270]{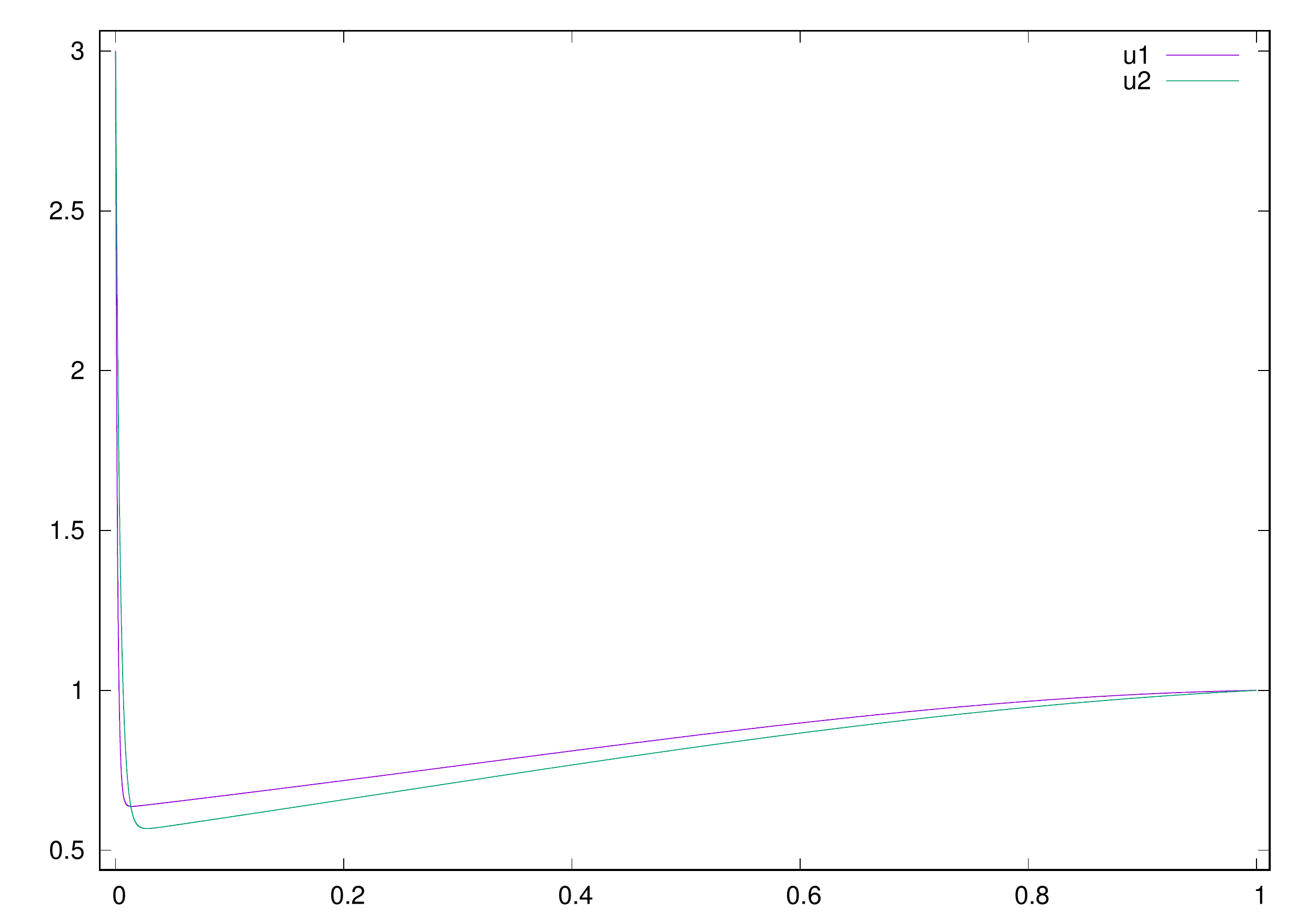}
\caption{Approximate solution of Example~\ref{ex1}.}
\label{f1}
\end{figure}
 
\begin{table}[h]\Small
\caption{}\label{tb1}
\renewcommand\arraystretch{1.1}
\noindent\[
\begin{array}{|c|c| c c c c c|}
\hline
 &&\multicolumn{5}{|c|}{\text{Number of mesh elements}\; N}\\
\cline{3-7}
\varepsilon_1&\varepsilon_2&128& 256&512&1024&2048\\
\hline
5^{-4}&2^{-7}&4.725E-02&2.887E-02&1.775E-02&1.019E-02&5.779E-03\\
5^{-5}&2^{-8}&4.789E-02&2.919E-02&1.792E-02&1.028E-02&5.827E-03\\
5^{-6}&2^{-9}&6.282E-02&4.456E-02&2.644E-02&1.535E-02&8.425E-03\\
5^{-7}&2^{-10}&7.146E-02&5.089E-02&3.212E-02&1.914E-02&1.095E-02\\
5^{-8}&2^{-11}&7.393E-02&5.243E-02&3.365E-02&2.009E-02&1.159E-02\\
5^{-9}&2^{-12}&7.470E-02&5.321E-02&3.437E-02&2.033E-02&1.174E-02\\
5^{-10}&2^{-13}&7.497E-02&5.355E-02&3.462E-02&2.040E-02&1.177E-02\\
5^{-11}&2^{-14}&7.508E-02&5.367E-02&3.471E-02&2.042E-02&1.179E-02\\
5^{-12}&2^{-15}&7.512E-02&5.372E-02&3.475E-02&2.042E-02&1.180E-02\\
5^{-13}&2^{-16}&7.513E-02&5.374E-02&3.477E-02&2.043E-02&1.181E-02\\
5^{-14}&2^{-17}&7.514E-02&5.375E-02&3.477E-02&2.043E-02&1.181E-02\\
5^{-15}&2^{-18}&7.514E-02&5.375E-02&3.478E-02&2.044E-02&1.181E-02\\
5^{-16}&2^{-19}&7.515E-02&5.376E-02&3.478E-02&2.044E-02&1.181E-02\\
5^{-17}&2^{-20}&7.515E-02&5.376E-02&3.478E-02&2.044E-02&1.181E-02\\
5^{-18}&2^{-21}&7.515E-02&5.376E-02&3.478E-02&2.044E-02&1.181E-02\\
\hline
\hline
D^N&&7.515E-02&5.376E-02&3.478E-02&2.044E-02&1.181E-02\\
p^N&&0.483E+00&0.628E+00&0.767E+00&0.791E+00&\\
C_p^N&&2.755E+00&2.755E+00&2.491E+00&2.047E+00&1.654E+00\\
\hline
\end{array}
\]
Computed order of $(\varepsilon_1,\varepsilon_2)$-uniform convergence, $p^* =  0.4833.$\\
Computed $(\varepsilon_1,\varepsilon_2)$-uniform error constant, $C_{p^*}^N=2.7546.\;\;\;\;$
\end{table}
 From Table~\ref{tb1}, it is to be noted that the error decreases as number of mesh elements N increases. Also for each N, the error stabilizes as $\varepsilon_1$ and $\varepsilon_2$ tends to zero.\\
\begin{example}\label{ex2}
Consider the boundary value problem for the system of convection diffusion equations on (0,1)
\begin{eqnarray}\label{ex21}
\varepsilon_1 u_1^{\prime \prime}(x)+u_1^\prime (x)-2u_1(x)+u_2(x)=-3(x-1),\hspace{1.5cm}\\
\label{ex22}
\varepsilon_2 u_2^{\prime \prime}(x)+(1+x)u_2^\prime (x)+xu_1(x)-(2x+1)u_2(x)=-2x,\\ \label{ex23}
 \text{with}\; u_1(0)  =0,\;u_2(0)=3,\; u_1(1)  =2,\;u_2(1)=2.\hspace{1.3cm}
\end{eqnarray}
\end{example}
The reduced problem corresponding to (\ref{ex21}) - (\ref{ex23}) is
\begin{eqnarray}\label{r1}
u_{01}^\prime (x)-2u_{01}(x)+u_{02}(x)=-3(x-1),\hspace{1.4cm}\\
\label{r2}
(1+x)u_{02}^\prime (x)+xu_{01}(x)-(2x+1)u_{02}(x)=-2x,\\ \label{r3}
 \text{with}\; u_{01}(1)  =2,\;u_{02}(1)=2.\hspace{3.1cm}
\end{eqnarray}
\indent Solution of the reduced problem is $(u_{01}(x), u_{02}(x))^T=(2x, x+1)^T$. Eventhough $u_{01}(x)$ coincides with $u_1(x)$ at the boundary points, $u_{02}(0) \ne u_2(0)$ implies that $\varepsilon_2$-layer may occur at $x=0$ in both the solution components $u_1$ and $u_2$ . For $N=1024,\; \varepsilon_1=5^{-6},\; \varepsilon_2=2^{-6}$, the plots of the approximate solution components of (\ref{ex21}) - (\ref{ex23})  shown in Figures 2 and 3 ensure the foresaid layer patterns.

\begin{figure}[h]
\includegraphics[width=0.33\textwidth, angle=270]{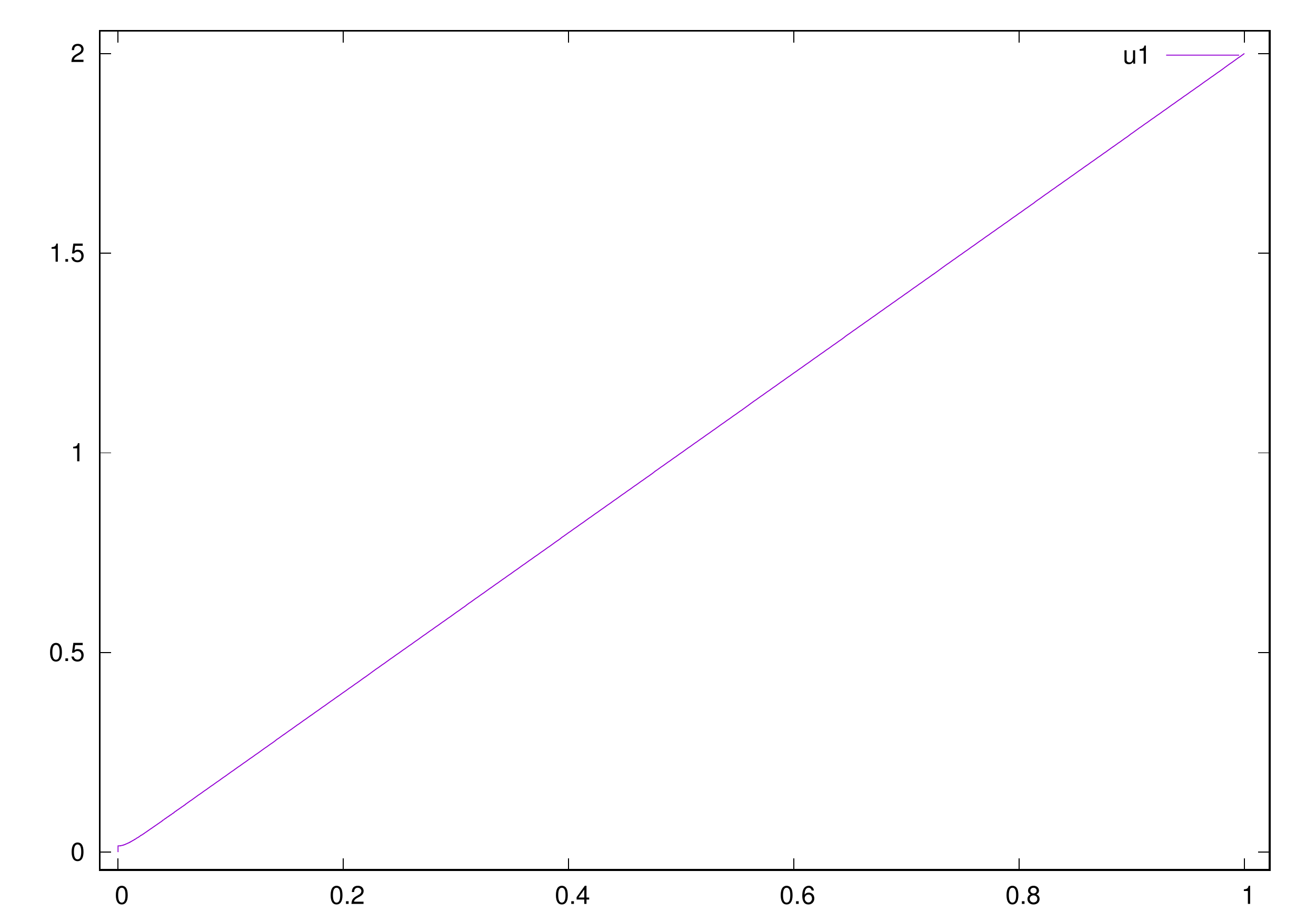}\hfill
\includegraphics[width=0.33\textwidth, angle=270]{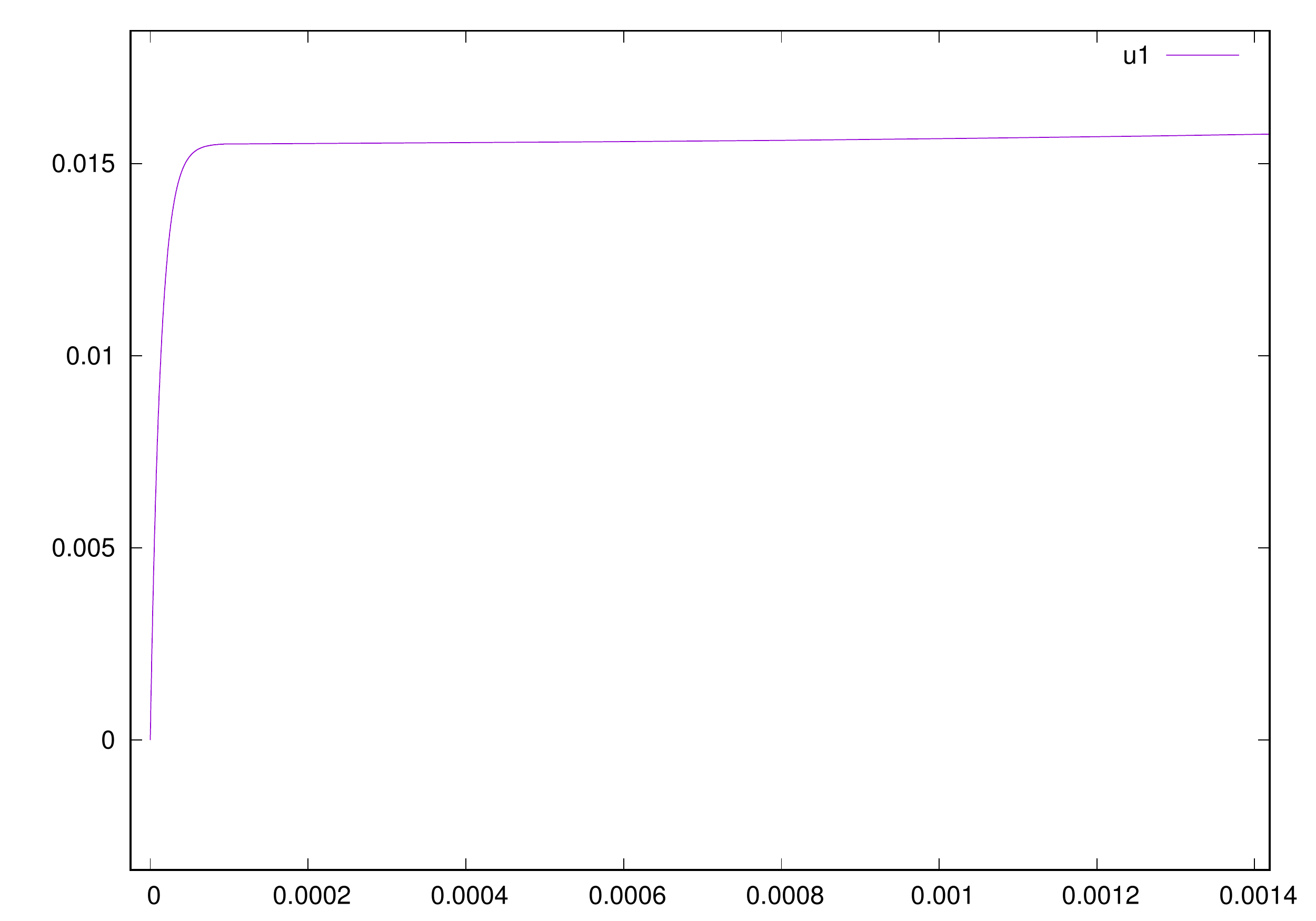}\\
u1 \hspace{4.5cm} u1 near x=0
\caption{Approximation of solution component u1 of Example~\ref{ex2}.}
\end{figure}
\begin{figure}[h]
\includegraphics[width=.5\textwidth, angle=270]{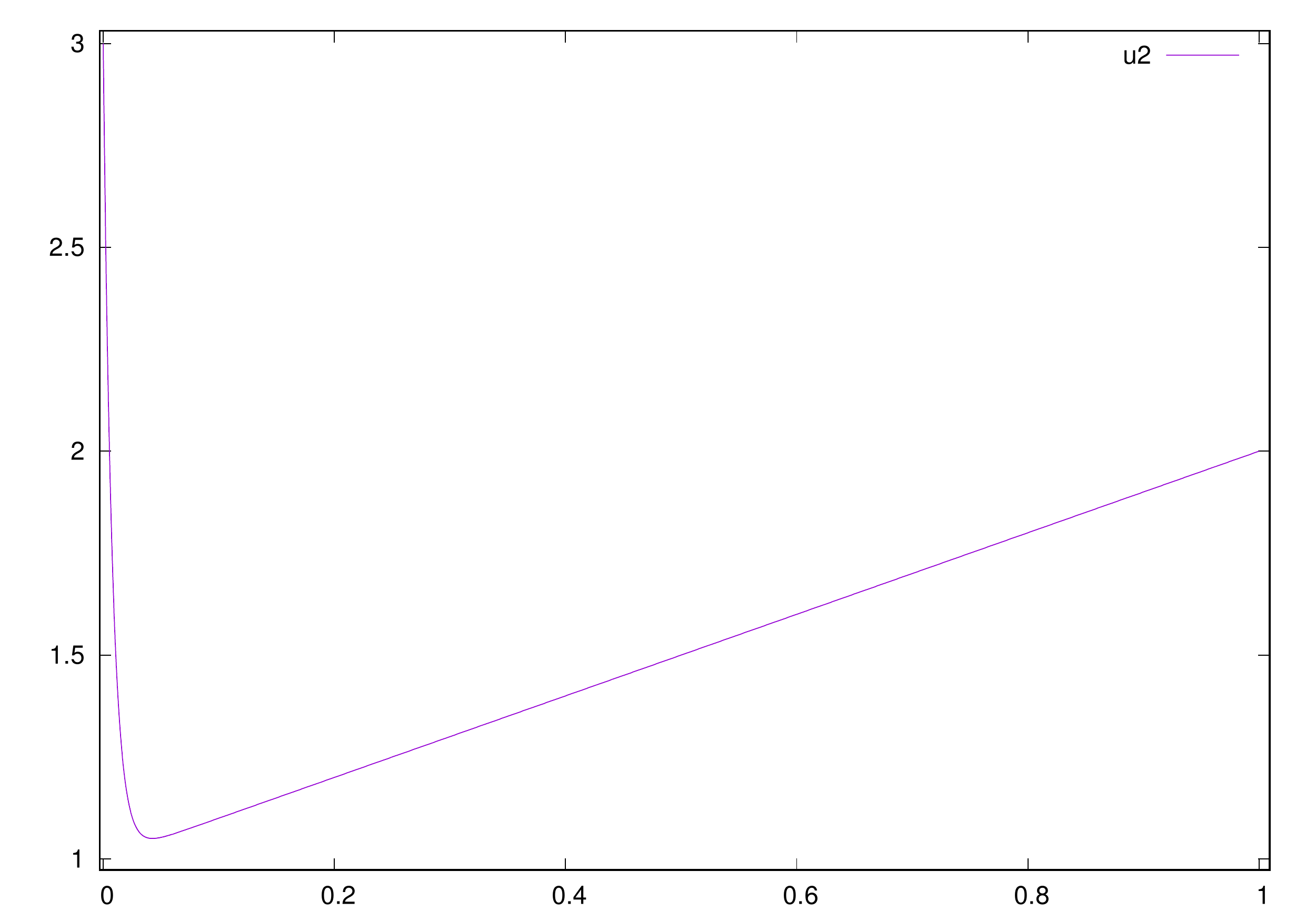}
\caption{Approximation of solution component u2 of Example~\ref{ex2}.}
\label{f22}
\end{figure}

\begin{example}\label{ex3}
Consider the boundary value problem for the system of convection diffusion equations on (0,1)
\begin{eqnarray}\label{ex31}
\varepsilon_1 u_1^{\prime \prime}(x)+u_1^\prime (x)-2u_1(x)+u_2(x)=-3(x-1),\hspace{1.5cm}\\
\label{ex32}
\varepsilon_2 u_2^{\prime \prime}(x)+(1+x)u_2^\prime (x)+xu_1(x)-(2x+1)u_2(x)=-2x,\\ \label{ex33}
 \text{with}\; u_1(0)  =1,\;u_2(0)=1,\; u_1(1)  =2,\;u_2(1)=2.\hspace{1.3cm}
\end{eqnarray}
\end{example}
Solution of the reduced problem is $(u_{01}(x), u_{02}(x))^T=(2x, x+1)^T$. Since,  $u_{01}(0) \ne u_1(0)$ and $u_{02}(0) = u_2(0)$, $\varepsilon_1$-layer is expected near $x=0$ only in the solution component $u_1$. For $N=1024,\; \varepsilon_1=5^{-4},\; \varepsilon_2=2^{-4}$, the plots of the approximate solution components of (\ref{ex31}) - (\ref{ex33}) shown in Figure 4 ensures the foresaid layer patterns.

\begin{figure}[t]
  \includegraphics[width=0.33\textwidth, angle=270]{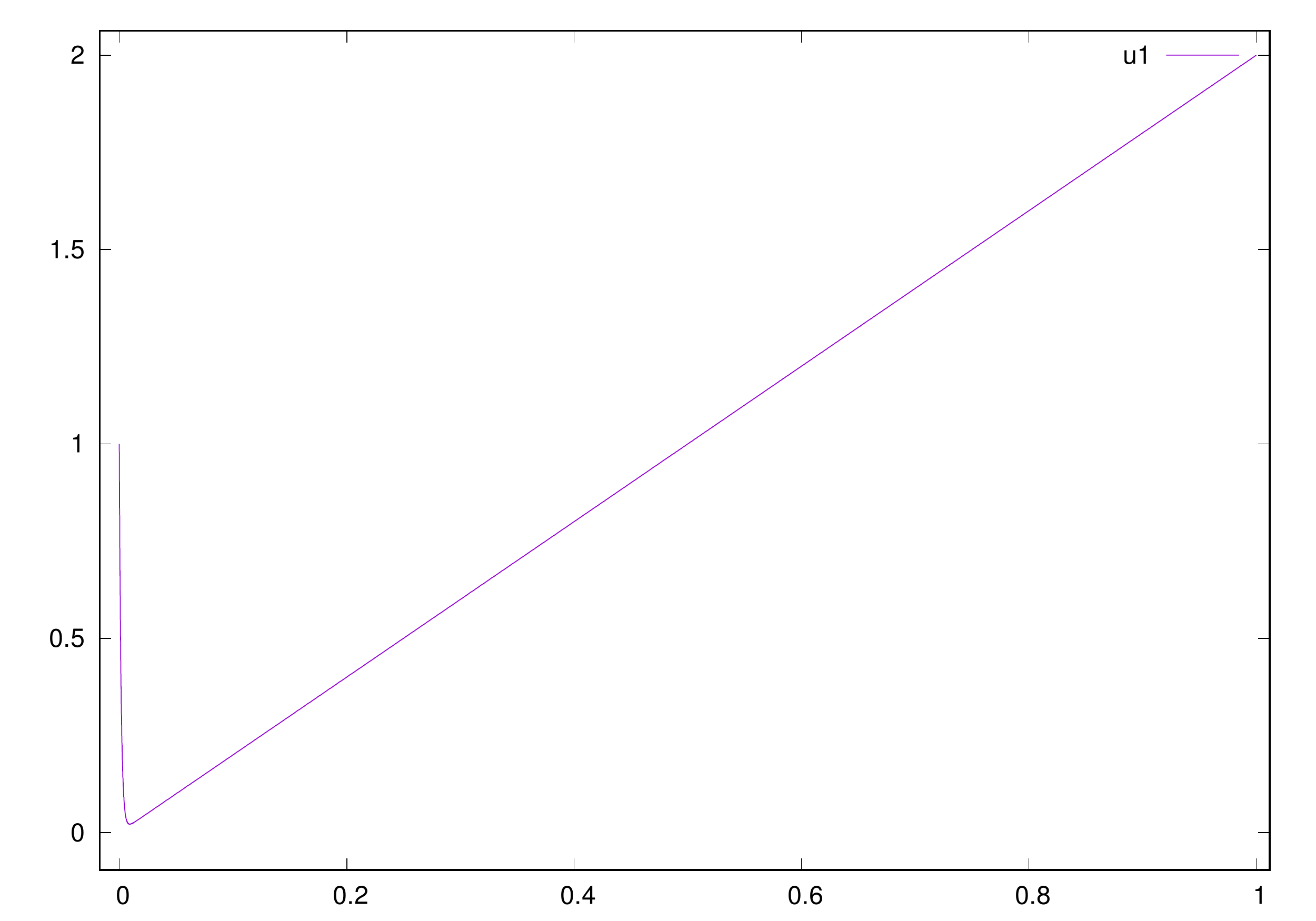}\label{f31}
  \hfill
  \includegraphics[width=0.33\textwidth, angle=270]{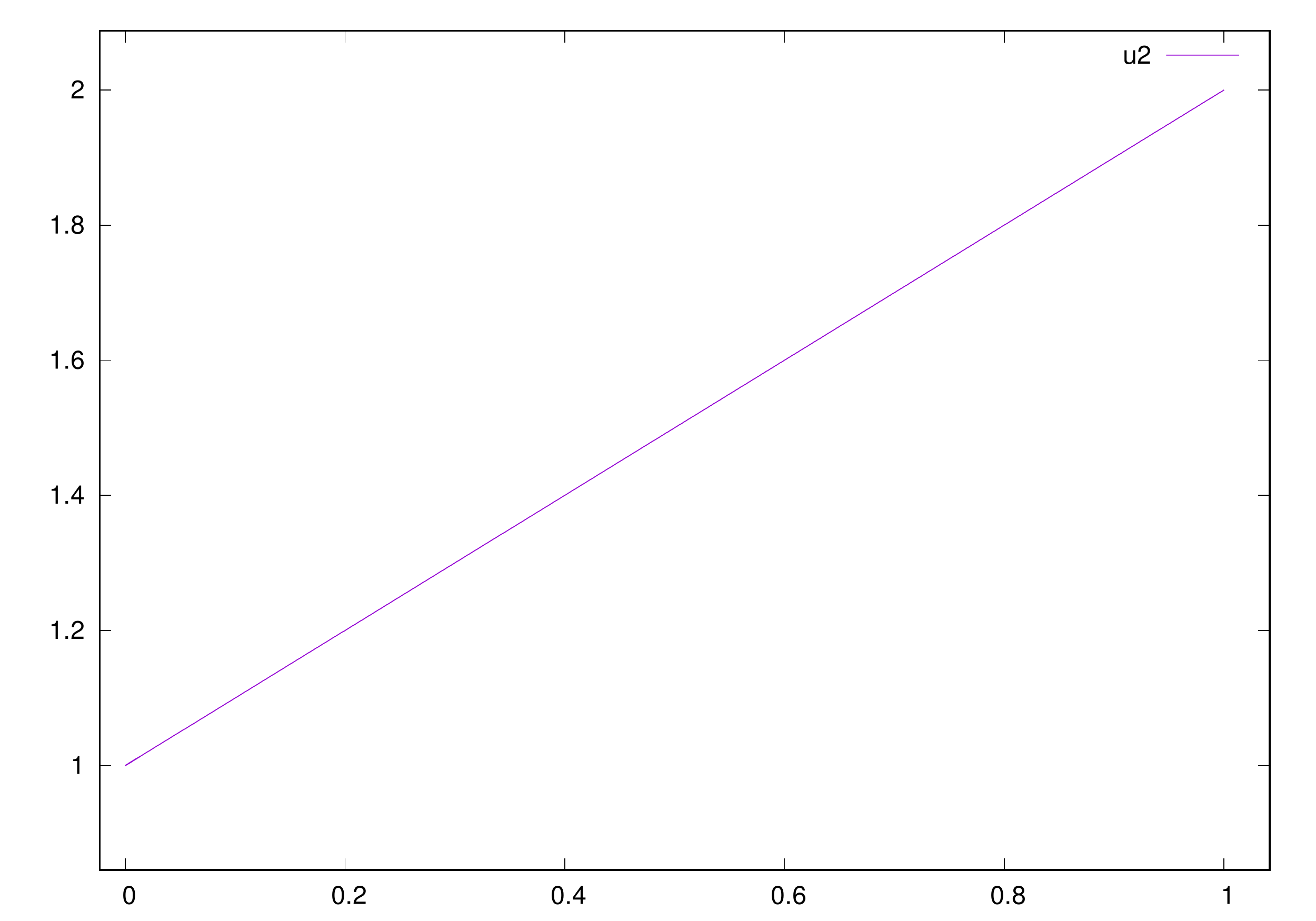}\label{f32}
  \caption{approximation of solution components of Example~\ref{ex3}.}
\end{figure}
\section*{Acknowledgement}
The first author wishes to acknowledge the financial support extended through Junior Research Fellowship by the University Grants Commission, India, to carry out this research work. Also the first and the third authors thank the Department of Science \& Technology, Government of India for the support to the Department through the DST-FIST Scheme to set up the Computer Lab where the computations have been carried out.\\

\end{document}